\documentclass[12pt]{article}
\linespread{1.3} 
\usepackage[a4paper,left=2.5cm,right=2.5cm,top=3cm,bottom=3cm]{geometry}
\setlength{\headheight}{15pt}
\usepackage[parfill]{parskip}
\usepackage[utf8]{inputenc}
\usepackage{amsthm} 
\usepackage{amsmath,mathtools}
\usepackage{amssymb}
\usepackage{mathtools}
\usepackage{changepage}  
\usepackage[hyperfootnotes=false]{hyperref}
\usepackage{soul,xcolor} 
\usepackage[nottoc,notlof,notlot]{tocbibind}

\theoremstyle{definition}
\newtheorem{ex}{Type}

\makeatletter
\newtheorem*{rep@theorem}{\rep@title}
\newcommand{\newreptheorem}[2]{%
\newenvironment{rep#1}[1]{%
 \def\rep@title{#2 \ref{##1}}%
 \begin{rep@theorem}}%
 {\end{rep@theorem}}}
\makeatother

\newtheoremstyle{theorem}
{10pt}
{3pt}
{\itshape} 
{} 
{\bfseries} 
{.} 
{.5em} 
{} 

\theoremstyle{theorem}
\newtheorem{defn}{Definition}[section]
\newtheorem{lem}[defn]{Lemma}
\newtheorem{thm}[defn]{Theorem}
\newreptheorem{thm}{Theorem}

\newtheorem{cor}[defn]{Corollary}
\newtheorem{question}[defn]{Question}

\makeatletter
\def\blfootnote{\gdef\@thefnmark{}\@footnotetext}
\makeatother

\usepackage[title]{appendix}

\title{Shattering $k$-sets with Permutations}
\author{J. Robert Johnson$^{\ast}$ \and Belinda Wickes$^{\ast}$}
\date{April 2023}

\begin{document}

\maketitle
\blfootnote{$^{\ast}$ School of Mathematical Science, Queen Mary University of London, London, E1 4NS, UK. \\Contact: r.johnson@qmul.ac.uk and b.wickes@qmul.ac.uk}
\begin{abstract}
    Many concepts from extremal set theory have analogues for families of permutations. This paper is concerned with the notion of shattering for permutations. A family $\mathcal{P}$ of permutations of an $n$-element set $X$ \textit{shatters} a $k$-set from $X$ if it appears in each of the $k!$ possible orders in some permutation in $\mathcal{P}$. The smallest family $\mathcal{P}$ which shatters every $k$-subset of $X$ is known to have size $\Theta(\log n)$.

    Our aim is to introduce and study two natural partial versions of this shattering problem.

    Our first main result concerns the case where our family must contain only $t$ out of $k!$ of the possible orders. When $k=3$ we show that there are three distinct regimes depending on $t$: constant, $\Theta(\log\log n)$, $\Theta(\log n)$. We also show that for larger $k$ these same regimes exist although they may not cover all values of $t$.

    Our second direction concerns the problem of determining the largest number of $k$-sets that can be totally shattered by a family with given size. We show that for any $n$, a family of $6$ permutations is enough to shatter a proportion between $\frac{17}{42}$ and $\frac{11}{14}$ of all triples. 
\end{abstract}

\textbf{Keywords:} permutations, shattering, permutation orders, fractional shattering, partial shattering

\section{Introduction}

\subsection{Background}\label{intro pt 1}
Let $S_n$ be the set of all permutations of $\{1,\dotsc,n\}$ thought of as ordered $n$-tuples. Our aim is to study properties of families of permutations from $S_n$ inspired by concepts of shattering from extremal set theory. We begin with the notion of shattering for sets. Let $\mathcal{F}$ be a family of subsets of $[n]=\{ 1,2,\dotsc,n \}$ and let $A \subseteq [n]$, we say that $A$ is \textit{shattered} by $\mathcal{F}$ if for each $B \subseteq A$ there exists a set $S \in \mathcal{F}$ such that $A \cap S = B$. The notion of shattered sets has uses throughout combinatorics and computer science, with the focus being on the size of families that shatter certain sets. For examples of work in this area see \cite{exp construction}, \cite{indep sets}, \cite{Sauer}, \cite{Shelah}, and \cite{VC}.

A family $\mathcal{R}$ of subsets of $[n]$ is \textit{$k$-independent} if any $R_1,R_2,\dotsc,R_k \in \mathcal{R}$ have the property that all $2^k$ intersections $\cap_{i=1}^k J_i$ are non-empty, where $J_i$ takes on either $R_i$ or its complement $R_i^c$. A large $k$-independent family $\mathcal{R}$ gives rise to a small family $\mathcal{F}$, where $\mathcal{F} \subseteq 2^{[r]}$ is a family that shatters all the $k$-subsets of $[r]$ and $|\mathcal{R}|=r$. To see this, first note that for each $x \in [n]$ we can define a set $F(x) \subseteq [r]$ by setting $i \in F(x)$ if and only if $x \in R_i$. Since $\mathcal{R}$ is $k$-independent, a family consisting of all such $F(x)$ sets must shatter all $k$-subsets of $[r]$. Set $\mathcal{F} = \{ F(x) : x \in [n] \}$ then clearly $|\mathcal{F}|=n$, so the bigger $r$ is, the greater the number of $k$-subsets shattered by $n$ sets. Kleitman and Spencer \cite{indep sets} posed the question `How large can a family of $k$-independent sets be?' which is equivalent to the question `How small can a family that shatters every $k$-subset of $[n]$ be?'.

\begin{thm}\label{indep set bound} {\normalfont(Kleitman and Spencer \cite{indep sets})}
    For fixed $k$ and $n$ sufficiently large there are absolute constants $d_1$ and $d_2$ such that
    \begin{align*}
    2^k d_2\log n \leq g_k(n) \leq k2^k d_1\log n
\end{align*}
where $g_k(n)$ is the size of the smallest family from $2^{[n]}$ that shatters every $k$-subset of $[n]$.
\end{thm}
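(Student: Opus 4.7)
The plan is to prove the two bounds by different methods.

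For the upper bound $g_k(n) \leq k 2^k \log n / d_1$, I would use the standard probabilistic method. Sample $\mathcal{F} = \{F_1, \ldots, F_m\}$ by including each $x \in [n]$ in each $F_i$ independently with probability $1/2$. For a fixed $k$-set $A \subseteq [n]$ and a fixed $B \subseteq A$, a single $F_i$ satisfies $F_i \cap A = B$ with probability exactly $2^{-k}$, so the probability that no $F_i$ realises this pattern on $A$ is $(1-2^{-k})^m \leq \exp(-m/2^k)$. A union bound over the $\binom{n}{k}2^k$ pairs $(A,B)$ shows the total failure probability is less than $1$ whenever $m \geq c\, k\, 2^k \log n$ for a suitable absolute constant $c$; hence such an $\mathcal{F}$ exists.

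For the lower bound $g_k(n) \geq 2^k \log n / d_2$, suppose $\mathcal{F} = \{F_1, \ldots, F_m\}$ shatters every $k$-set. I would switch to the ``column picture'': for each $x \in [n]$ define the signature $v_x \in \{0,1\}^m$ by $(v_x)_i = \mathbf{1}[x \in F_i]$; the shattering hypothesis becomes the statement that for every $k$ columns $v_{x_1}, \ldots, v_{x_k}$ all $2^k$ row patterns are realised. I would then reduce to the $k=2$ case by conditioning on $k-2$ coordinates. Fix arbitrary elements $x_1, \ldots, x_{k-2} \in [n]$ and any pattern $\epsilon \in \{0,1\}^{k-2}$, and let $R_\epsilon \subseteq [m]$ be the set of rows where $v_{x_1}, \ldots, v_{x_{k-2}}$ exhibit $\epsilon$. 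The shattering hypothesis applied to $\{x_1, \ldots, x_{k-2}, y, z\}$ forces that on the fibre $R_\epsilon$ the remaining $n - k + 2$ columns are pairwise $2$-independent, and hence are non-constant, pairwise distinct, and no two are a complementary pair. The maximum number of vectors in $\{0,1\}^{|R_\epsilon|}$ with this property is $(2^{|R_\epsilon|} - 2)/2 = 2^{|R_\epsilon|-1} - 1$, so $n - k + 2 \leq 2^{|R_\epsilon| - 1} - 1$ and therefore $|R_\epsilon| \geq \log_2 n + O_k(1)$. Since the $R_\epsilon$ partition $[m]$, summing over the $2^{k-2}$ choices of $\epsilon$ yields $m = \sum_\epsilon |R_\epsilon| \geq 2^{k-2}\log_2 n + O_k(1)$, which is the claimed bound with $d_2 = 4 + o(1)$.

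The main obstacle is the bookkeeping for the lower bound: one has to extract from ``$\{x_1, \ldots, x_{k-2}, y, z\}$ shattered for all $y, z$'' the clean statement of $2$-independence for the restricted columns on $R_\epsilon$, and then translate $2$-independence into the three constraints (non-constant, distinct, non-complementary) that drive the bound $2^{|R_\epsilon|-1}-1$. The upper bound is a routine union-bound calculation once the sampling distribution is fixed, and the slack of a factor of $k$ between the upper and lower estimates is absorbed into the constants $d_1$ and $d_2$.
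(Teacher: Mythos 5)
The paper does not prove Theorem~\ref{indep set bound}; it is quoted from Kleitman and Spencer~\cite{indep sets} as background, so there is no internal argument to compare against. Judged on its own merits, your proposal is a correct proof of both bounds.

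The upper bound is exactly the standard first-moment argument, and your calculation is fine: $\binom{n}{k}2^k(1-2^{-k})^m \le n^k 2^k e^{-m/2^k} < 1$ once $m > 2^k k(\ln n + \ln 2)$, giving $m = O(k\,2^k\log n)$. The lower bound via column signatures is also correct. Fixing $x_1,\dotsc,x_{k-2}$ and a pattern $\epsilon\in\{0,1\}^{k-2}$, the fibres $R_\epsilon$ partition $[m]$, and on each fibre the remaining $n-k+2$ restricted columns must realise all four patterns in every pair, which forces them to be non-constant, pairwise distinct, and non-complementary. Your implication runs in the right direction (pairwise $2$-independence is \emph{stronger} than those three conditions, but you only use that it implies them), and the count $2^{|R_\epsilon|-1}-1$ of vectors satisfying the three conditions is an honest upper bound. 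Summing $|R_\epsilon|\ge \log_2 n + O_k(1)$ over the $2^{k-2}$ fibres gives $m \ge 2^{k-2}\log_2 n + O_k(1)$, matching the stated form with $d_2$ around $4$.

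One small quibble with your closing sentence: the factor-of-$k$ gap between the two bounds is not ``absorbed into the constants'' --- it appears explicitly in the theorem statement, as $k 2^k/d_1$ versus $2^k/d_2$. The constants $d_1,d_2$ are genuinely absolute; the $k$ is a real, displayed gap between the estimates.
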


Our starting point is an analogue of this family size problem using permutations in place of sets. This has been studied under a variety of names (\cite{furedi}, \cite{total shattering lower bound},  \cite{Tarui}, \cite{spencer: first element}): completely scrambling permutation families, mixing permutations, and sequence covering arrays. We first formalise the problem, establish notation, and summarise the previous work.

Consider the set $S_n$ of all permutations of $[n]$, any permutation $P\in S_n$ corresponds to a particular linear order of the elements of $[n]$, so $P$ can be written as $P = (p_1,p_2,\dotsc,p_n)$ where $\{p_1,p_2,\dotsc,p_n \} = [n]$. Note that this is not cycle notation, rather it can be thought of as the second line of two-row notation. For all $a,b \in [n]$, we write $a <_P b$ to mean that $a$ precedes $b$ in the permutation $P \in S_n$.

Suppose $R$ is a permutation of $[k]$. For a $k$-tuple $\{ a_1,\dotsc,a_k \} \subseteq [n]$ with $a_1 < \dots < a_k$ and a $P \in S_n$, we say $\{ a_1,\dotsc,a_k \}$ follows the pattern $R \in S_k$ in $P$ if $a_i <_P a_j$ for all $i,j \in [k]$ with $i <_R j$. That is, $\{ a_1,\dotsc,a_k \}$ follows the pattern $R$ in $P$ if the restriction of $P$ to $\{ a_1,\dotsc,a_k \}$ is order isomorphic to $R$. For any $k$-tuple $X$ from $[n]$, we use $P_X$ to denote the permutation pattern from $S_k$ followed by $X$ in $P \in S_n$. We can express our shattering condition through permutation patterns.

\begin{defn}\label{def shatter}
    We say that a family $\mathcal{S} \subseteq S_n$  \textit{shatters} the $k$-tuple $X \subseteq [n]$ if $\{ P_X : P \in \mathcal{S} \}=S_k$.
\end{defn}
In other words, $\mathcal{S}$ shatters $X$ if every possible ordering of the elements of $X$ appears in the permutations of $\mathcal{S}$.

We are interested in families that shatter many different sets at the same time, in particular when all sets of the same size are shattered by one family.
Let $f_k(n)$ be the smallest integer such that there exists a family $\mathcal{S}$ of permutations from $S_n$ that shatters every $k$-tuple from $[n]$ and has $|\mathcal{S}|=f_k(n)$. Throughout, if a family is said to shatter every $k$-tuple in $[n]$, then it is implied that said family is a subset of $S_n$.

Clearly we have that $f_k(n) \geq |S_k|$, otherwise we certainly cannot shatter any $k$-tuple. It is also plain that $f_k(k)=k!$ since the family $S_k$ is suitable. 

The order of magnitude of the value $f_k(n)$ is known asymptotically. The upper bound given by Spencer \cite{spencer: first element} can be seen with a simple probabilistic argument, the lower bound is more involved and was shown by Radhakrishnan \cite{total shattering lower bound}
\begin{equation*}
    \left(\frac{(k-1)!}{\log e} + o(1)\right)\log n \leq f_k(n) \leq \frac{k }{\log k! - \log (k!-1)}\log n.
\end{equation*}

To simplify the notation throughout, $\log (n) = \log_2(n)$ unless otherwise stated.

In the case where $k=3$ the best upper bound is actually given by a construction rather than using the probabilistic method. The construction is given by Tarui in \cite{Tarui} and the lower bound from F\"{u}redi \cite{furedi}
\begin{equation*}
   \frac{2 }{\log e}\log n \leq f_3(n) \leq 2\log n + (1+o(1))\log\log n.
\end{equation*}
For $k>3$ no explicit construction that matches the order of magnitude of the probabilistic upper bound is known.

Our main focus will be partial and fractional variations of this problem, which we introduce in the next section.

\subsection{Partial and Fractional Shattering Problems}\label{intro pt 2}
As well as the problem of determining the smallest size of a permutation family that shatters every $k$-tuple, it is natural to ask about small families that cover some subset of orders for every $k$-tuple. Equally it is natural to consider small families that shatter some partial collection of $k$-tuples. Our main aim is to introduce and investigate these variations of the original family size problem. There are two different problems we will consider.
\begin{itemize}
    \item Given $t \leq k!$, find the smallest family of permutations from $S_n$ which ensures each $k$-tuple appears in at least $t$ orders.
    \item Given $\alpha \in [0,1]$, find the smallest family that shatters at least $\alpha \binom{n}{k}$ of all $k$-tuples.
\end{itemize}

Note that there is another possible definition of partial shattering. The problem is, given some fixed set of patterns $T \subseteq S_k$, find the smallest family of permutations from $S_n$ such that every $k$-tuple follows all of the patterns in $T$. It turns out that for any $\mathcal{S} \subseteq S_n$ in which all $k$-tuples follow a specific non-monotone pattern, we have $|\mathcal{S}|=O(\log n)$ (see Lemma \ref{T lower}). This matches the lower bound for total shattering, making this variation uninteresting.

For our first problem we formally define partial shattering as follows, using the same notation as Definition \ref{def shatter}.
\begin{defn}\label{def partial}
    We say that a family $\mathcal{S} \subseteq S_n$ (partially) $t$-shatters the $k$-tuple $X \subseteq [n]$ if $|\{ P_X : P \in \mathcal{S} \}|\geq t$. We define $f_k(n,t)$ to be the smallest integer such that there exists a family $\mathcal{S}$ that $t$-shatters every $k$-tuple in $[n]$, and $|\mathcal{S}|=f_k(n,t)$.
\end{defn}
Clearly $f_k(n,k!)$ is the size of the smallest shattering family on $k$-tuples, $f_k(n,k!)=f_k(n)$. We also have the trivial cases $f_k(n,1)=1$ and $f_k(n,2)=2$ which can be seen by taking only monotone permutations.

Spencer \cite{spencer: first element} and F\"{u}redi \cite{furedi} also discuss variations which look to cover a subset of patterns of each $k$-tuple where each element appears in a specific place. Neither of these variations can be expressed in terms of partial $t$-shattering or vice versa. In \cite{furedi}, F\"{u}redi defines an extremely general framework called $\mathcal{S}$-mixing, we note that partial $t$-shattering is one of the most natural special instances of this $\mathcal{S}$-mixing.

For our second problem it is more natural to ask the question in reverse than it is to fix the fraction. We ask for the maximum number of shattered $k$-tuples from a family of fixed size.
\begin{defn}\label{def fractional}
    Let $F_k(n,m)$ be the largest $\alpha \in [0,1]$ such that there exists a collection $\mathcal{R}$ of exactly $m$ permutations from $S_n$, with the property that $\alpha \binom{n}{k}$ $k$-tuples are totally shattered by $\mathcal{R}$. We call this fractional shattering of $n$ with $m$ permutations.
\end{defn}
It is plain that $F_k(n,m) = 0$ whenever $m < k!$ and that $F_k(k,k!)=1$. In fact, by a result of Levenshtein \cite{perfect family} we have that $F_k(k+1,k!)=1$.

This instance of shattering is genuinely different from Partial Shattering, and showcases different behaviour as a result. Unlike Partial Shattering where the constructed families grow with $n$, the families constructed for Fractional Shattering only contain a constant number of permutations.

The remainder of this paper will be structured as follows. Starting with partial shattering we begin by showing that fixing a non-monotone pattern which all $k$-tuples follow results in a large ($\Theta(\log n)$) family of permutations. We then consider the case where $k=3$ and classify the size of $f_3(n,t)$ asymptotically for all values of $t$.
\begin{thm}\label{class t 3}
We have the following bounds on $f_3(n,t)$
\begin{equation*}
    f_3(n,t) = 
    \begin{cases}
    t & \text{for } t=1,2 \\
    \Theta( \log\log n )  & \text{for } t=3,4 \\
    \Theta( \log n )  & \text{for } t=5,6.
    \end{cases}
\end{equation*}
\end{thm}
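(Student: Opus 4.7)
My plan is to handle the three size classes separately. The easy cases $t=1,2$ follow from taking the identity (and its reverse), with the matching trivial lower bound $f_3(n,t)\geq t$ since any $|\mathcal{S}|$ permutations realise at most $|\mathcal{S}|$ patterns on any fixed triple. The case $t=6$ is precisely the known result $f_3(n)=\Theta(\log n)$ quoted in the introduction.

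For $t=5$ the upper bound $O(\log n)$ is inherited from $t=6$ by monotonicity of $f_3(n,\cdot)$ in $t$. For the matching lower bound I use a reversal trick: given a family $\mathcal{S}$ attaining $f_3(n,5)$, write $\mathcal{S}^{\mathrm{rev}}=\{(p_n,\dotsc,p_1):(p_1,\dotsc,p_n)\in\mathcal{S}\}$. On each triple $T$, $\mathcal{S}$ realises at least five of the six patterns, missing at most a single pattern $\pi_T$, so $\mathcal{S}^{\mathrm{rev}}$ misses at most the reverse pattern $\pi_T^{\mathrm{rev}}$. Since reversal acts on $S_3$ as a fixed-point-free involution with orbits $\{123,321\}$, $\{132,231\}$, $\{213,312\}$, we have $\pi_T\neq\pi_T^{\mathrm{rev}}$, and hence $\mathcal{S}\cup\mathcal{S}^{\mathrm{rev}}$ completely shatters every triple. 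This gives $f_3(n)\leq 2f_3(n,5)$, so $f_3(n,5)=\Omega(\log n)$.

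For $t=3,4$, monotonicity in $t$ reduces the task to a lower bound for $t=3$ and an upper bound for $t=4$. A family $\mathcal{S}$ achieving $t=3$ cannot contain a \emph{common monotone triple}, that is, a triple whose pattern in every $P\in\mathcal{S}$ lies in $\{123,321\}$, since such a triple realises at most two distinct patterns. Iterating Erdős--Szekeres across the $s=|\mathcal{S}|$ permutations (extract a subset of size $\sqrt{n}$ monotone in $P_1$, then restrict and extract a subset of size $n^{1/4}$ monotone in $P_2$, and so on) yields a subset $S\subseteq[n]$ of size at least $n^{1/2^s}$ on which every permutation of $\mathcal{S}$ is monotone. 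Once $n\geq 3^{2^s}$ this subset has size at least $3$ and so automatically contains a common monotone triple, a contradiction; hence $s=\Omega(\log\log n)$.

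For the $t=4$ upper bound I take $\mathcal{S}=\{\mathrm{id},\mathrm{rev}\}\cup\mathcal{F}\cup\mathcal{F}^{\mathrm{rev}}$, where $\mathcal{F}$ is a family of $k$ permutations of $[n]$ with no common monotone triple and $k=O(\log\log n)$. On every triple $T$, $\mathrm{id}$ contributes pattern $123$, $\mathrm{rev}$ contributes $321$, some $P\in\mathcal{F}$ contributes a non-monotone pattern $\pi$, and $P^{\mathrm{rev}}\in\mathcal{F}^{\mathrm{rev}}$ contributes the distinct non-monotone pattern $\pi^{\mathrm{rev}}$, giving four patterns and thus $t\geq 4$. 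The total size is $2+2k=O(\log\log n)$. The delicate step, which I expect to be the main obstacle, is the existence of such an $\mathcal{F}$: a union bound on $k$ independent uniform permutations only forces $3^k\gtrsim\binom{n}{3}$, yielding $k=O(\log n)$, which is exponentially off. Closing the gap to $n\approx 2^{2^k}$ requires an explicit iterated lex-product construction seeded by the Erdős--Szekeres-extremal permutation $(2,4,1,3)$ on $[4]$, with a careful case analysis at each doubling step to rule out the common monotone triples that could otherwise be introduced by the tie-breaking rule.
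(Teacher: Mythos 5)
Your decomposition is sound and several steps coincide with the paper's proof: the trivial $t=1,2$ cases, the iterated Erd\H{o}s--Szekeres argument for the $t=3$ lower bound (this is exactly Theorem~\ref{ES lower}), and citing the known $\Theta(\log n)$ results for $t=6$. Your reversal trick for the $t=5$ lower bound is correct and is a genuinely different, cleaner route than the paper's argument: the paper looks at triples $\{n,x,y\}$, notes that at least one of $(x,n,y)$ or $(y,n,x)$ must appear, defines for each $P\in\mathcal{S}$ the partition of $[n-1]$ according to position relative to $n$, and applies Lemma~\ref{useful lem}; your observation that $\mathcal{S}\cup\mathcal{S}^{\mathrm{rev}}$ must shatter completely (because reversal is a fixed-point-free involution on $S_3$) directly gives $f_3(n)\le 2f_3(n,5)$ and requires no new separating-family lemma. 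Your reduction of the $t=4$ upper bound to the existence of an $O(\log\log n)$ family with no common monotone triple, then sandwiching with $\mathrm{id}$, $\mathrm{rev}$ and $\mathcal{F}^{\mathrm{rev}}$, is also a nice and correct reduction.

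However, the $t=4$ upper bound has a real gap, and you have located it honestly but not closed it. The required family with no common monotone triple of size $O(\log\log n)$ is precisely the hard content, and the sketch of ``an iterated lex-product construction seeded by $(2,4,1,3)$'' will not work as stated. In a pure lex-product (block) construction built from a single short base family, the problematic triples are the ones that split at two different block levels: say $x,y$ agree through level $k$ but $z$ splits off at level $j<k$. In every permutation that just applies the base permutation level-by-level, such a triple's pattern is determined by one inequality at level $j$ (placing $z$ before or after the block containing $x,y$) and one at level $k$ (ordering $x,y$); nothing forces a non-monotone pattern, and indeed one gets $123$ or $321$ all the time. This is exactly why the paper's Lemma~\ref{little construction} introduces a second family of permutations (``Type~2''), built from $\lceil\log n + C\log\log n\rceil$ partitions of the set of block levels \`a la Lemma~\ref{useful lem 2}, whose sole job is to handle these cross-level triples; the recursion $f_3(n^n,4)\le f_3(n,4)+\lceil\log n+C\log\log n\rceil+1$ then unwinds to $2\log\log n$. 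Without an analogue of this second ingredient, or a case analysis that substitutes for it, your construction step does not go through. An alternative way to close the gap that avoids reproving the construction: invoke Spencer's result (cited in the paper as \cite{spencer: first element}, and used there in the proof of Theorem~\ref{class t k}) that there is an $O(\log\log n)$ family in which, for every triple $\{a<b<c\}$, some permutation puts $b$ first --- that permutation gives a non-monotone pattern on $\{a,b,c\}$, so Spencer's family already has no common monotone triple, and your $\{\mathrm{id},\mathrm{rev}\}\cup\mathcal{F}\cup\mathcal{F}^{\mathrm{rev}}$ argument finishes the job.
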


We follow up with the extension to larger values of $k$. The same separation into three distinct size categories follows over to the $k>3$ cases, although in this setting there is the possibility that another size class exists for $t \in [k+1,2(k-1)!]$.

\begin{thm}\label{class t k}
We have the following bounds on $f_k(n,t)$
    \begin{equation*}
    f_k(n,t) = \begin{cases}
    t  & \quad \text{ when } t=1,2\\
    \Theta(\log\log n) & \quad \text{ when } t \in [3,k]\\
    \Theta(\log n) & \quad \text{ when } t \in [2(k-1)!+1,k!].
    \end{cases}
\end{equation*}
\end{thm}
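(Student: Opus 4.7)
The plan is to treat the three regimes of $t$ separately; the first is immediate, taking the identity together with its reverse to induce two distinct monotone patterns on every $k$-tuple for $t\in\{1,2\}$.

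For the middle regime $t\in[3,k]$, the lower bound comes from an iterated Erd\H{o}s--Szekeres argument. Suppose $\mathcal{S}=\{P_1,\dots,P_m\}$ partially shatters every $k$-tuple with $t$ orders. By Erd\H{o}s--Szekeres applied to the composition $P_2\circ P_1^{-1}$, there is a subset $A_1\subseteq[n]$ of size at least $\sqrt n$ on which $P_1$ and $P_2$ induce the same pattern on every $k$-tuple, so $\mathcal{S}|_{A_1}$ has only $m-1$ effective permutations but still partially shatters every $k$-tuple of $A_1$ with $t$ orders. Iterating $j$ times produces a ground set of size at least $n^{1/2^j}$ carrying at least $m-j$ effective permutations; pushing to $j=m-t+1$ yields effective size $t-1<t$, which forces the ground set to contain no $k$-tuple and hence $n^{1/2^{m-t+1}}<k$, giving $m\ge t-1+\log_2(\log n/\log k)=\Omega(\log\log n)$. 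For the upper bound, any family inducing at least $s$ patterns on every triple automatically induces at least $s$ patterns on every $k$-tuple (a collapse among $k$-tuple patterns propagates to every triple), so for $t\le 4$ the bound $f_k(n,t)\le f_3(n,t)=O(\log\log n)$ follows from Theorem \ref{class t 3}. For $k\ge 5$ and $t\in[5,k]$ this shortcut fails since $f_3(n,t)=\Theta(\log n)$ there, and I would instead give a direct construction that iteratively doubles the ground set while adding a constant number of permutations per level, so that $O(\log\log n)$ levels suffice.

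For the upper regime $t\in[2(k-1)!+1,k!]$, the upper bound is $f_k(n,t)\le f_k(n,k!)=O(\log n)$ by Spencer's probabilistic construction. The lower bound begins with the observation that any family inducing at least $2(k-1)!+1$ patterns on a $k$-tuple $X$ must exhibit at least three distinct elements of $X$ as the first element (else the patterns are confined to $2(k-1)!$ configurations) and, symmetrically, at least three distinct elements as the last. For $k=3$ this directly forces pair shattering of every pair $\{a,b\}$: taking any third element $c$, the three first-able elements in the triple $\{a,b,c\}$ must include both $a$ and $b$, so both orders of $(a,b)$ appear in $\mathcal{S}$, and the classical $\Omega(\log n)$ bound for separating families then finishes the argument.

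The main obstacle is extending this pair-shattering reduction to $k\ge 4$, where three first-able elements in a $k$-tuple may avoid the fixed pair. My plan is to combine the first-able and last-able conditions: if a pair $(a,b)$ were unshattered, say $a\prec_P b$ in every $P\in\mathcal{S}$, then $b$ could never be first in a $k$-tuple containing $\{a,b\}$, and so the three-first-able condition must be realised among the remaining $k-1$ elements of every such $k$-tuple. Propagating this across all $\binom{n-2}{k-2}$ such $k$-tuples, together with the symmetric last-able condition, should force a contradiction via a position-pigeonhole argument (pinning $b$ simultaneously into the last $k-1$ and first $k-1$ positions of every permutation in $\mathcal{S}$, impossible for $n\ge 2k-1$); failing a clean closure, a Radhakrishnan-style entropy argument applied directly to the ``three first-able per $k$-tuple'' property is the backup route yielding the $\Omega(\log n)$ bound.
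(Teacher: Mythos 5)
Your proof handles the trivial regime and the broad contour of the middle-regime lower bound correctly (it is essentially the paper's iterated Erd\H{o}s--Szekeres argument, modulo an off-by-one in how many patterns the merged permutations contribute, since the monotone restriction yields up to two patterns, increasing and decreasing, rather than one). However, the other two nontrivial pieces have real problems.

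\textbf{Middle-regime upper bound.} Your reduction $f_k(n,t)\le f_3(n,t)$ is correct as a logical statement, but as you note it only helps for $t\le 4$. For $t\in[5,k]$ with $k\ge 5$ you gesture at a construction ``that iteratively doubles the ground set while adding a constant number of permutations per level, so that $O(\log\log n)$ levels suffice,'' but this does not parse: doubling per level costs $\Theta(\log n)$ levels, not $\Theta(\log\log n)$. To get $O(\log\log n)$ levels one needs the ground set to be squared (or raised to a fixed power) per level. The paper sidesteps this by invoking Spencer's theorem that there is a family of size $O(\log\log n)$ in which, for every $k$-tuple $X$ and every $x\in X$, some permutation places $x$ first among $X$; this immediately gives $f_k(n,k)=O(\log\log n)$ and hence $f_k(n,t)=O(\log\log n)$ for all $t\le k$. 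Without this (or a concrete substitute), the middle-regime upper bound is missing.

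\textbf{Upper-regime lower bound.} Your opening observation (at least three distinct first elements, and symmetrically last elements, per $k$-tuple) is correct. But the $k=3$ conclusion is wrong: ``both orders of $(a,b)$ appear in $\mathcal{S}$'' is the statement that $\mathcal{S}$ is a $2$-suitable family, and two permutations (the identity and its reverse) already achieve this. Pair-shattering therefore yields no lower bound at all, let alone $\Omega(\log n)$; the ``classical separating-family bound'' does not apply because you have not produced a separating family of \emph{sets} satisfying the relevant disjointness property. The paper's actual $k=3$ argument is different: it uses the stronger consequence that for every pair $x,y\in[n-1]$ the element $n$ must appear \emph{between} $x$ and $y$ in some permutation (otherwise $\{x,y,n\}$ sees at most $4$ orders). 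Defining $A_P=\{x\in[n-1]:x\text{ after }n\text{ in }P\}$ then genuinely gives a separating family of subsets of $[n-1]$, hence $|\mathcal{S}|\ge\log(n-1)$. For general $k$ the paper uses a similar but more delicate idea: in the $k$-tuple $\{x,y,n-k+3,\dots,n\}$, at least $2(k-1)!+1$ orders forces $x$ and $y$ to be non-adjacent (hence split by some $i\in\{n-k+3,\dots,n\}$) in some permutation; tracking, for each such $i$, the set $A^i_x=\{P:x\text{ after }i\}$ gives an injective $(k-2)$-coordinate encoding of $[n-k+2]$ with alphabet size $2^{|\mathcal{S}|}$, whence $|\mathcal{S}|\ge\frac{1}{k-2}\log(n-k+2)$. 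Your sketch for $k\ge 4$ (a ``position-pigeonhole'' or entropy backup) is explicitly unfinished and does not reach this conclusion, so the upper-regime lower bound is also a gap.
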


A natural question is then, whether or not $f_k(n,t)$ always one of the three sizes we see here or if there is another class. This highlights the interesting question of how many size classes there are for such shattering problems in general. We note that for all the permutation shattering variants discussed above and in \cite{furedi, total shattering lower bound, Tarui, spencer: first element}, the smallest known families realising each has size $\Theta(\log n)$, $\Theta(\log\log n)$, or constant.

We then move on to the fractional version of the problem. Again we focus on the case where $k=3$, the first interesting case is $F_3(n,6)$ where we get the following bounds.
\begin{thm}\label{frac: 3 6}
For any $n\geq 8$ we have
\begin{equation*}
    \frac{17}{42} \leq F_3(n,6) \leq \frac{11}{14}.
\end{equation*}
\end{thm}
In fact our method gives a slightly stronger upper bound which is hard to quantify but shows that the upper bound is in fact strict when $n>8$. We also show that in general the value of $F_k(n,m)$ is decreasing in $n$.
\begin{thm}\label{frac: weakly decreasing}
For fixed $k$ and fixed $m\geq k!$ we have $F_k(n,m) \geq F_k(n+1,m)$.
\end{thm}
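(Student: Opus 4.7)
The plan is a standard deletion and averaging argument. Fix a family $\mathcal{R}=\{P_1,\dots,P_m\}\subseteq S_{n+1}$ of exactly $m$ permutations that achieves the maximum in Definition \ref{def fractional}, so it shatters $F_k(n+1,m)\binom{n+1}{k}$ of the $k$-tuples in $[n+1]$. For each $j \in [n+1]$, let $\mathcal{R}_j$ be obtained by deleting the element $j$ from every permutation in $\mathcal{R}$; after relabelling $[n+1]\setminus\{j\}$ as $[n]$ in an order-preserving way, $\mathcal{R}_j$ is a family of at most $m$ permutations in $S_n$. The first key observation is that for any $k$-tuple $X \subseteq [n+1]$ with $j \notin X$, the pattern $(P_i)_X$ equals the pattern of $X$ in the deleted permutation, because deletion preserves the relative order of the remaining elements. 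Hence $X$ is shattered by $\mathcal{R}_j$ if and only if $X$ is shattered by $\mathcal{R}$.

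Next I would double count. Let $\mathcal{X}$ be the collection of $k$-tuples of $[n+1]$ shattered by $\mathcal{R}$, so $|\mathcal{X}|=F_k(n+1,m)\binom{n+1}{k}$. For each $X\in\mathcal{X}$ there are exactly $n+1-k$ choices of $j$ with $j \notin X$, so
\begin{equation*}
\sum_{j=1}^{n+1} |\{X \in \mathcal{X} : j \notin X\}| = (n+1-k)\,|\mathcal{X}|.
\end{equation*}
By averaging over $j$, some index $j^{\ast}$ satisfies
\begin{equation*}
|\{X \in \mathcal{X} : j^{\ast}\notin X\}| \geq \frac{n+1-k}{n+1}\,F_k(n+1,m)\binom{n+1}{k} = F_k(n+1,m)\binom{n}{k},
\end{equation*}
using the identity $(n+1-k)\binom{n+1}{k}=(n+1)\binom{n}{k}$. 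By the first observation, every such $X$ is shattered by $\mathcal{R}_{j^{\ast}}$.

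The only small subtlety is that the deletion map $P\mapsto P\setminus\{j\}$ is not necessarily injective, so $\mathcal{R}_{j^{\ast}}$ might consist of fewer than $m$ distinct permutations. This is easily handled: pad $\mathcal{R}_{j^{\ast}}$ with arbitrary additional permutations of $[n]$ (or arbitrary repeats, depending on how one reads Definition \ref{def fractional}) to bring its size up to exactly $m$; adding permutations can only increase the set of shattered $k$-tuples. The resulting family of size $m$ in $S_n$ shatters at least $F_k(n+1,m)\binom{n}{k}$ of the $k$-tuples in $[n]$, giving $F_k(n,m)\geq F_k(n+1,m)$ as required. There is no real obstacle here; the only thing to get right is the combinatorial identity making the averaging come out exactly to $\binom{n}{k}$, and the padding remark to stay within the letter of the definition.
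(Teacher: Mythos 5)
Your proof is correct and is essentially the same argument as the paper's, just phrased dually: the paper restricts an arbitrary family of $m$ permutations of $[n+1]$ to each of the $n+1$ subsets of size $n$ and counts \emph{un}-shattered $k$-tuples, arriving at $F_k(n+1,m)\le F_k(n,m)$, whereas you delete each element $j$ in turn from an optimal family, count \emph{shattered} $k$-tuples, and average to find a good $j^\ast$; both hinge on the identity $(n+1-k)\binom{n+1}{k}=(n+1)\binom{n}{k}$. Your explicit padding remark to keep the family size exactly $m$ is a nice touch that the paper handles only implicitly.
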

This means that the limit for $F_k(n,m)$ exists for fixed $k$, $m$ and as $n$ tends to infinity, we see that this limit lies strictly between $0$ and $1$ for all $k$ and $m$. 

We also look at a method of iterating small families in such a way as to give a family of permutations on much larger $n$, that preserves most of the shattering conditions from the initial family. We give some bounds using this method and some small starting families we call Perfect Families. A perfect family for $k$ on $n$, denoted $\mathcal{Q}_k(n)$, is a family of exactly $k!$ permutations from $S_n$ which shatter every $k$-tuple in $[n]$ (a realisation of the family giving $F_k(n,k!)=1$). An example of a family \hypertarget{Q3(4)}{$\mathcal{Q}_3(4)$} is given by the following:
\begin{equation*}
    \begin{array}{ l l l}
        Q_1= (1,2,3,4) \quad& Q_2= (2,4,1,3) \quad& Q_3= (3,4,1,2) \\
        Q_4= (1,4,3,2) \quad& Q_5= (4,2,3,1) \quad& Q_6= (3,2,1,4).
    \end{array}
\end{equation*}
We note that Levenshtein's result in \cite{perfect family} can be expressed as showing a Perfect Family for $k$ on $n$ exists whenever $n=k+1$.

Finally we have a construction for the original total shattering problem, that is an upper bound for $f_k(n)$. The best bound, of order $\log n$, is given by a probabilistic argument and no explicit construction of this size is known. Our construction gives a family with a power of $\log n$ permutations, this is above the known upper bound for $f_k(n)$ but is constructive.

We finish with some open problems about all the topics covered.

\section[Partially shattering every \texorpdfstring{$k$}{k}-tuple]{Partially shattering every \texorpdfstring{$\boldsymbol{k}$}{k}-tuple}

The following are two well known but useful results that will be used throughout.

\begin{lem}\label{useful lem}
Let $(A,B)$ be a partition of $[n]$, so $A \cup B = [n]$ and $A\cap B = \emptyset$. Any family $\mathcal{U}$ of such partitions with the property that, for every $x,y \in [n]$ there exists $(A,B) \in \mathcal{U}$ where exactly one of $x$ and $y$ is in $A$ and the other is in $B$, also satisfies 
$|\mathcal{U}| \geq \lceil \log n \rceil$. Furthermore there exists such a family $\mathcal{U}$ where the bound holds with equality.
\end{lem}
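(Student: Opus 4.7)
The plan is to encode each element of $[n]$ by a binary string whose length equals $|\mathcal{U}|$, and use the separating property to argue that this encoding must be injective. Label the partitions in $\mathcal{U}$ as $(A_1,B_1),(A_2,B_2),\dots,(A_m,B_m)$ where $m=|\mathcal{U}|$. For each $x\in[n]$, define $\sigma(x)\in\{0,1\}^m$ by setting the $i$-th coordinate to $0$ if $x\in A_i$ and $1$ if $x\in B_i$. The hypothesis on $\mathcal{U}$ says exactly that for any two distinct $x,y\in[n]$ there is some $i$ for which $x$ and $y$ lie in different parts of $(A_i,B_i)$, i.e.\ $\sigma(x)_i\neq\sigma(y)_i$. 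Hence $\sigma$ is injective.

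From injectivity I would conclude $n\le |\{0,1\}^m|=2^m$, so $m\ge \log n$; since $m$ is an integer, $m\ge\lceil\log n\rceil$, giving the lower bound.

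For the matching construction, set $m=\lceil\log n\rceil$ so that $2^m\ge n$, and choose an injection $\tau\colon[n]\to\{0,1\}^m$ (for instance the binary expansion of $x-1$ padded to length $m$). Define the $i$-th partition by $A_i=\{x\in[n]:\tau(x)_i=0\}$ and $B_i=[n]\setminus A_i$. Injectivity of $\tau$ gives that any two distinct elements $x,y$ differ in some coordinate $i$, and that coordinate places them on opposite sides of $(A_i,B_i)$, so the separating property holds. Thus $|\mathcal{U}|=\lceil\log n\rceil$ is attainable.

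There is no real obstacle here; the only thing to watch is the edge case when $n$ is not a power of $2$, where one must remember that padding the binary encoding to length $\lceil\log n\rceil$ still yields distinct strings for distinct elements, so the construction works uniformly.
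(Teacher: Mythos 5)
Your proof is correct, and since the paper states this lemma as a well-known fact without supplying a proof, there is nothing to compare against; your binary-encoding argument (injectivity of the indicator string for the lower bound, binary expansions of $0,\dots,n-1$ padded to length $\lceil\log n\rceil$ for the construction) is exactly the standard argument one would expect.
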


\begin{lem}\label{useful lem 2} {\normalfont(Chung, Graham, and Winkler \cite{addressing})}
Let $(A,B)$ be a partition of $[n]$, so $A \cup B = [n]$ and $A\cap B = \emptyset$. Any family $\mathcal{U}$ of these partitions with the property that, for every $x,y \in [n]$ there exists $(A,B) \in \mathcal{U}$ where $x \in A$ and $y \in B$, must satisfy $|\mathcal{U}| \geq \lceil \log n + \left(\frac{1}{2} + o(1)\right)\log\log n \rceil$. Moreover, there exists such a family with $|\mathcal{U}| = \lceil \log n + \left(\frac{1}{2} + o(1)\right)\log\log n \rceil$.
\end{lem}

Next we look at a useful lemma showing that whenever we require a family of permutations to follow a fixed non-monotone pattern, that family has the same order of magnitude as a totally shattering family. This is the reason we have chosen to define partially shattering as in Definition \ref{def partial}.

\begin{lem} \label{T lower}
Let $n \geq 3$ and $R \in S_k$ be any non-monotone permutation pattern. If $\mathcal{S} \subseteq S_n$ is a family of permutations for which every $k$-tuple follows $R$ in at least one $P\in \mathcal{S}$, then we must have $|\mathcal{S}| \geq \log(n-k+2)$. Hence $|\mathcal{S}| = \Omega(\log n)$.
\end{lem}

\begin{proof}
Let $\mathcal{S}$ and $R$ be as described in the statement of the lemma. Note that whenever $R$ is non-monotone there is some element $x \in [k]$ such that when $R$ is restricted to $\{ x, x+1, x+2 \}$ the triple is non-monotone.

If $R$ induces $(x+1,x,x+2)$ or $(x+2,x,x+1)$ then set $y = x$. If $R$ induces $(x,x+2,x+1)$ or $(x+1,x+2,x)$ then set $y=n - k +x+2$.

For each $P \in \mathcal{S}$ we generate a partition of the set $W:=[n] \setminus ([x-1] \cup [n - k +x+3,n] \cup y)$ into two parts $A_P$ and $B_P$, where $A_P$ contains every $w\in W$ such that $y<_P w$ and $B_P = W \setminus A_P$.

Note that when $y=x$ the set $W$ contains only elements larger than or equal to $y$, and when $y=n - k +x+2$ the set $W$ only contains smaller or equal elements. Hence for any pair $a,b \in W$ we must have one of the orders $(a,y,b)$ or $(b,y,a)$ appearing in $\mathcal{S}$ since exactly one of them follows $R$ as part of the $k$-tuple $\{ 1,2,\dotsc,x-1,y,a,b,n-k+x+3,\dotsc,n \}$. To see this note that the elements that correspond to $\{ x,x+1,x+2 \}$ are exactly $\{ y,a,b \}$.

Then we have satisfied the conditions for Lemma \ref{useful lem} and we must have that $|\mathcal{S}| \geq \log |W|$, which gives the result.
\end{proof}

Our aim will be to prove the following classification of $f_3(n,t)$.

\begin{repthm}{class t 3}
We have the following bounds on $f_3(n,t)$
\begin{equation*}
    f_3(n,t) = 
    \begin{cases}
    t & \text{for } t=1,2 \\
    \Theta( \log\log n)  & \text{for } t=3,4 \\
    \Theta( \log n)  & \text{for } t=5,6.
    \end{cases}
\end{equation*}
\end{repthm}
The upper bound when $t=6$ comes from the total shattering bounds for $f_3(n)$. The value of $f_3(t)$ whenever $t=1,2$ is trivial. Indeed note that any one single permutation requires each triple to follow some pattern (not necessarily the same pattern). So we get that $f_3(n,1)=1$ simply by choosing any $P \in S_n$. We call $\overline{P}$ the \textit{reverse} permutation of $P$ if $a <_{\overline{P}} b$ whenever $b <_P a$. All triples follow a different pattern in $\overline{P}$ as they do in $P$, therefore we must have that $f_3(n,2)=2$. These observations are not limited to triples and can be applies identically when $k>3$.

To get a lower bound when $t \geq 3$ we need to work a little harder, but it does not complicate the method to consider the general $k$-tuples rather than just triples. Observe that taking $P$ to be the increasing permutation $(1,2,\dotsc,n-1,n)$ we get that $\overline{P}$ must be the decreasing permutation. Now consider a third permutation $P'$, any $k$-tuple contained in a monotone subpermutation of $P'$ will not follow a new pattern. We know by the Erd\H{o}s-Szekeres Theorem that when $n$ is large we must have some reasonably large monotone subpermutation. We use this idea to get the lower bound in this case.

\begin{thm}{\normalfont(Erd\H{o}s-Szekeres Theorem \cite{ES thm})}\label{ES thm}
Let $r,s \in \mathbb{N}$, then any sequence of real numbers with length at least $n=rs+1$ contains an increasing subsequence of length at least $r+1$ or a decreasing subsequence with length at least $s+1$.
\end{thm}

\begin{thm}\label{ES lower}
For any $n \geq 3$ and every $t\geq 3$, we have $f_k(n,t) \geq  \log\log n - C$ where $C$ is a constant dependant on $k$ and $t$. More precisely we have
\begin{equation*}
    f_k(n,t) \geq \log\log(n-1) - \log\log(k-1) + t -3.
\end{equation*}
\end{thm}
\begin{proof}
Let $\mathcal{S}$ be a family of permutations of $[n]$ that $t$-shatters every $k$-tuple. Suppose for a contradiction that $|\mathcal{S}| \leq \log\log(n-1) - \log\log(k-1) + t -4$. 

Choose any $t-3$ permutations from $\mathcal{S}$, and set $\mathcal{S'}$ to be the $\mathcal{S}$ with the chosen permutations removed. This is a family of permutations from $S_n$ that $3$-shatters every $k$-tuple. Note that $|\mathcal{S'}| \leq \log\log(n-1) - \log\log(k-1) -1 =m$.

Take any $P_1 \in \mathcal{S'}$, then by the \hyperref[ES thm]{Erd\H{o}s-Szekeres Theorem} $P_1$ must contain an increasing subsequence of length $r=\lfloor (n-1)^\frac{1}{2} \rfloor +1$ (or a decreasing subsequence of length $r$). Let the elements in this monotone subsequence be written as $X_1 = \{x_1,\dotsc,x_r\}$, and let $P_1(X_1)$ be the restriction of $P_1$ to the elements of $X_1$. Then $P_1(X_1)$ is simply a permutation of $X_1$ where the elements appear in the same order that they appear in $P_1$.

Look at another permutation $P_2 \in \mathcal{S'}$ restricted to the elements of $X_1$, $P_2(X_1)$. Applying Erd\H{o}s-Szekeres again, this time to $P_2(X_1)$, we see that there must be an monotonic subsequence of length $\lfloor (r-1)^\frac{1}{2} \rfloor +1$. Let $X_2 \subseteq X_1$ be the set of elements in this subsequence. Then consider $P_3(X_2)$ and generate $X_3 \subseteq X_2$ in an analogous manner.

Take each permutation from $\mathcal{S'}$ into consideration one by one, at  step $i$ generate a set of `bad' elements $X_i \subseteq X_{i-1}$ by applying Erd\H{o}s-Szekeres to $P_i(X_{i-1})$ and finding a monotone subsequence of length at least $\lfloor (|X_{i-1}|-1)^\frac{1}{2} \rfloor +1$.

Consider the set $X_m$, it must contain elements that appear in a monotone subsequence of every permutation in $\mathcal{S'}$. In other words, the elements of $X_m$ appear in a maximum of $2$ possible orders. Therefore if $|X_m|\geq k$ then $X_m$ contains a $k$-tuple that does not appear in $3$ orders across $\mathcal{S'}$. Hence, from our initial conditions we must have $|X_m|< k$.

Note that the number of elements we are restricting to in the final stage is at most
\begin{equation*}
    (n-1)^{(\frac{1}{2})^m} + 1.
\end{equation*}
Then observe
\begin{align*}
    (n-1)^\frac{1}{2^m} + 1 &< k\\
    \log\log(n-1) - \log\log(k-1) &< m.
\end{align*}

On the other hand, based on the assumed size of $\mathcal{S}$ we have that $m=\log\log(n-1) - \log\log(k-1) -1$, a contradiction. Therefore we must have
\begin{equation*}
    |\mathcal{S}|\geq \log\log(n-1) - \log\log(k-1) + t -3.
\end{equation*}

\end{proof}

From this we can see that if $t\geq 3$ then $f_k(n,t)$ is always between $\Omega(\log\log n)$ and $O(\log n)$. In the case where $k=3$, Theorem \ref{class t 3} shows that the size of $f_3(n,t)$ always falls into one of these size categories.

The next result gives an upper bound on $f_3(n,4)$, first we show the bound in Theorem \ref{little construction bound} using the recursion of Lemma \ref{little construction}, then we give the construction that provides the recursion.

\begin{lem} \label{little construction}
For $n \geq 3$ we have that $f_3(n^n,4) \leq f_3(n,4) + \log n + \left(\frac{1}{2} + o(1)\right)\log\log n  + 2$.
\end{lem}

Therefore we get the following bound.
\begin{thm}\label{little construction bound}
For large n we have $\log\log n  \leq f_3(n,4) \leq 2 \log\log n$
\end{thm}
\begin{proof}
The lower bound is directly from Lemma \ref{ES lower} with $k=3$ and $t=4$.

For the upper bound, write $n = (m^m)^{m^m}$ for some real number $m$, noting that $m$ may not be an integer. Then from Lemma \ref{little construction} we have
\begin{align*}
    f_3(n,4) &\leq f_3(\lceil m^m \rceil,4) + \log \lceil m^m \rceil + \left(\frac{1}{2}+o(1)\right)\log \log \lceil m^m \rceil  + 2\\
    &\leq  f_3((m+1)^{m+1},4) + \log m^m + \log \log m^m + 3\\
    &\leq  f_3(m+1,4) + \log(m+1) + \log \log (m+1) + 3 + \log m^m + \log \log m^m + 3\\
    &\leq  f_3(m+1,4) + \log m  + \log \log m  + \log m^m + \log \log m^m + 8\\
    \intertext{we can now use Tarui's upper bound for $f_3(n)$ in \cite{Tarui}}
    &\leq  2(\log (m+1)  + \log \log (m+1)) + \log \log m^m  + \log \log (m^m)^{m^m} + 8\\
    &\leq  2(\log m  + \log \log m) + \log \log m^m  + \log \log (m^m)^{m^m} + 12\\
    &\leq  2\log \log m^m + \log \log m^m  + \log \log (m^m)^{m^m} + 12\\
    &\leq  \log \log n + 3\log \log m^m + 12\\
    &\leq  2\log \log n.
\end{align*}

\end{proof}

Now we see the construction that gives the recursion.

\begin{proof}[Proof of Lemma \ref{little construction}]
Let $\mathcal{S}$ be a $4$-shattering family for triples in $[n]$ with $|\mathcal{S}| = f_3(n,4)$. We will use this family to construct a new family from $S_{n^n}$ that $4$-shatters every triple.

Assign each $x \in [n^n]$ to a unique string $(x_1,\dotsc,x_n) \in [n]^n$, by equating the standard order on $[n^n]$ with the lexicographic order on $[n]^n$. That is, $x=1$ is assigned to $(1,1,\dotsc,1,1,1)$, $x=n$ is assigned $(1,1,\dotsc,1,1,n)$, $x=n+1$ is assigned $(1,1,\dotsc,1,2,1)$, and so on. We call $(x_1,\dotsc,x_n)$ the code (or unique code) for $x$. Let $d:[n^n]^2 \to [n]$ be the function giving the first coordinate that differs between two elements, so $d(x,y) = \min\{i:x_i \neq y_i\}$. We will use these unique codes to generate two types of permutations on $[n^n]$.

\begin{ex}\phantomsection\label{type 1}
\begin{adjustwidth}{0.5cm}{}
    \hspace{-0.5cm}Here we apply permutations from $\mathcal{S}$. We will generate one permutation $P^n$ of $[n^n]$ from each permutation $P \in \mathcal{S}$.
    
    Consider any $P\in \mathcal{S}$, we can apply $P$ to any set of $n$ objects. In particular we can apply $P$ to each coordinate of the unique code of every $x \in [n^n]$, call the resulting string the $P$-permuted unique code of $x$. Having found the $P$-permuted unique code of every $x \in [n^n]$, we get a permutation $P^n \in S_{n^n}$ by considering the order on $[n^n]$ given by the lexicographic order on the $P$-permuted unique codes of each $x \in [n^n]$.

    The result is that for elements $x,y \in [n^n]$ with $d(x,y)=i$, we have that $x$ precedes $y$ in $P^n$ if and only if $x_i$ precedes $y_i$ in $P$. In other words, $x <_{P^n} y$ if and only if $x_i <_P y_i$.
\end{adjustwidth}
\end{ex}

We do this for all $P\in \mathcal{S}$ which gives us $|\mathcal{S}| = f_3(n,4)$ permutations of $[n^n]$, call this collection of permutations $\mathcal{S}^n$. 

To see which triples are now $4$-shattered, consider the triple $\{x,y,z\}$ with the codes $(x_1,x_2,\dotsc,x_n)$, $(y_1,y_2,\dotsc,y_n)$ and $(z_1,z_2,\dotsc,z_n)$ respectively. Suppose all three elements agree in the first $k \in [0,n-1]$ coordinates, so $x_i = y_i = z_i$ for $i \leq k$, and further suppose that none agree in coordinate $k+1$, that is $d(x,y)=d(x,z)=d(y,z)=k+1$. Then note that the order of $x,y,z$ in $P^n$ relies only on the order of $x_{k+1},y_{k+1},z_{k+1}$ in $P$. Since $\mathcal{S}$ $4$-shatters triples in $[n]$ there must be permutations $P_1,P_2,P_3,P_4 \in \mathcal{S}$ that $4$-shatter $\{x_{k+1},y_{k+1},z_{k+1}\}$, hence $P^n_1,P^n_2,P^n_3,P^n_4$ must $4$-shatter $\{x,y,z\}$

The only triples that do not have $4$ orders covered by permutations in $\mathcal{S}^n$ are those that have two elements that agree in the first $k$ coordinates of their unique code and the final element only agrees in the first $r$ coordinates where $r < k$. This is equivalent to triples $\{x,y,z\}$ with $d(x,y)=k$ and $d(x,z)=d(y,z)=r$, we will call such triples `bad'. Note that for a `bad' triple $\{x,y,z\}$ where $x<y<z$ we must have either $d(x,y)=k$ and $d(x,z)=d(y,z)<k$ or $d(y,z)=k$ and $d(x,y)=d(x,z)<k$. It cannot be that $d(x,z)=k$ and $d(x,y)=d(y,z)=r<k$ because $x<y<z$ means that $x_r<y_r<z_r$ but $d(x,z)=k$ implies $x_r=z_r$.

We can assume without loss of generality that $\mathcal{S}$ contains the monotone increasing order, therefore we can assume $\mathcal{S}^n$ contains it. We now construct the other collection of permutations to cover orders on these `bad' triples.

\begin{ex}\phantomsection\label{type 2}
\begin{adjustwidth}{0.5cm}{}
    \hspace{-0.5cm}By Lemma \ref{useful lem 2} we are able to find $\lceil \log n + \left(\frac{1}{2} + o(1)\right)\log\log n \rceil$ partitions of $[n]$ into two sets $I$ and $D$ that satisfy the conditions in \ref{useful lem 2}. In this section we will view $[n]$ as the set of coordinates for the unique codes, and therefore $I$ and $D$ as partitions of the coordinates. For each partition we will define one permutation in $S_{n^n}$.
    
    Let $(I,D)$ be any of our partitions and start by considering the first coordinate. If $1 \in I$ then we order the elements increasing by coordinate $1$, namely $x$ will precede $y$ if $x_1 < y_1$. If $1 \in D$ then order elements with decreasing coordinates, meaning $x$ will precede $y$ if $x_1 > y_1$. Next look at the second coordinate, if $2 \in I$ order such that $x$ precedes $y$ if $x_2 < y_2$, otherwise order so $x$ precedes $y$ if $x_2 > y_2$. Continue in this manner until the elements have been ordered with respect to each of their coordinates.
    
    Formally, consider $x,y \in [n^n]$ and let $d(x,y)=i$, we have that $x$ precedes $y$ if
\begin{equation*}
    \begin{cases}
    x_i < y_i  & \quad \text{ and } i \in I\\
    x_i > y_i  & \quad\text{ and } i \in D.
    \end{cases}
\end{equation*}

We use the above process to construct $\lceil \log n +\left(\frac{1}{2} + o(1)\right)\log\log n \rceil$ permutations with the property that for any coordinates $i$ and $j$ we can always find two permutations such that one has $i$ increasing and $j$ decreasing and the other has $i$ decreasing and $j$ increasing.

\end{adjustwidth}
\end{ex}

Let $\mathcal{T}$ be the set of these \hyperref[type 2]{Type 2} permutations along with the permutation of $[n^n]$ that is totally decreasing (if this is not already included by the two types), then $|\mathcal{T}| \leq \lceil \log n + \left(\frac{1}{2} + o(1)\right)\log\log n \rceil +1$.

To identify the triples that are partially shattered by $\mathcal{T}$, consider a triple $\{x,y,z\}$ where $x<y<z$ which was not $4$-shattered by $\mathcal{S}^n$. As discussed after \hyperref[type 1]{Type 1} there are two cases, either $d(x,y)=i$ and $d(x,z)=d(y,z)=j$ where $j<i$, or $d(y,z)=i$ and $d(x,y)=d(x,z)=j$ for $j<i$.

Suppose $d(x,y)=i$ and $d(x,z)=d(y,z)=j$ where $j<i$. We know from our assumption that the order $(x,y,z)$ appears in $\mathcal{S}^n$, we also know that $(z,y,x)$ appears in $\mathcal{T}$ since we included the decreasing permutation here. Furthermore we know that there is some permutation in $\mathcal{T}$ where $i$ is increasing and $j$ is decreasing. Since $x<z$ and $d(x,z)=j$ we must have $x_j<z_j$ from the construction of the unique codes, similarly we have $x_i<y_i$. Then for any permutation with $i$ increasing, $x$ must appear before $y$, and $j$ decreasing means $x$ (and $y$ since $x_j=y_j$) comes after $z$. This means that the order $(z,x,y)$ is covered. Similarly there is a permutation where $i$ is decreasing and $j$ is increasing, giving the order $(y,x,z)$. By the same reasoning, if we are in the second case where $d(y,z)=i$ and $d(x,y)=d(x,z)=j$, then we find the orders $(x,y,z)$, $(z,y,x)$, $(y,z,x)$ and $(x,z,y)$.

We now have our desired partial shattering condition, the family $\mathcal{S}^n\cup\mathcal{T}$ covers $4$ orders for each triple in $[n^n]$.

Thus
\begin{align*}
    f_3(n^n,4) \leq |\mathcal{S}^n| + |\mathcal{T}| = f_3(n,4) + \lceil \log n + \left(\frac{1}{2} + o(1)\right)\log\log n \rceil + 1.
\end{align*}
\end{proof}

We now have all the ingredients needed to prove Theorem \ref{class t 3}.

\begin{proof}[Proof of Theorem \ref{class t 3}]
Clearly $f_3(n,1)=1$ as any $P \in S_n$ forces all triples from $[n]$ to appear in one order. Recall the reverse permutation pf $P$, $\overline{P}$, then we must have that all triples follow a different pattern in $\overline{P}$ as they did in $P$, hence $f_3(n,2)=2$.

The lower bound $f_3(n,3) \geq \log\log(n-1)$ comes directly from Theorem \ref{ES lower}. The upper bound $f_3(n,4) \leq 2\log\log n$ comes directly from Theorem \ref{little construction bound}. This gives us $f_3(n,t) = \Theta(\log\log n)$ when $t=3,4$.

To see $f_3(n,5) \geq \log(n-1)$, let $\mathcal{S}$ be a family of permutations from $S_n$ that $5$-shatters every triple. Consider any triple of the form $\{ n,x,y \}$, we must have at least one of the orders $(x,n,y)$ and $(y,n,x)$ appearing in some permutation from $\mathcal{S}$ otherwise we have at most $4$ orders for $\{ n,x,y \}$. For each $P \in \mathcal{S}$ generate a partition of $[n-1]$ by having
\begin{align*}
    A_P &:= \{ x \in [n-1]: x \text{ appears after } n \text{ in } P \} \\
    B_P &:= \{ x \in [n-1]: x \text{ appears before } n \text{ in } P \}.
\end{align*}
Then using Lemma \ref{useful lem} in order to ensure at least one of the orders $(x,n,y)$ and $(y,n,x)$ is seen we must have at least $\log(n-1)$ permutations in $\mathcal{S}$.

Finally we have that $f_3(n,6) \leq (2 +o(1))\log n$ from \cite{Tarui} since $f_3(n,6) = f_3(n)$.
\end{proof}

For triples, the different values of $t$ feed equally into the three size classifications. In the general $k$-tuple case, we actually have that for most values of $t$ we require $O(\log n)$ permutations.

\begin{repthm}{class t k}
We have the following bounds on $f_k(n,t)$
    \begin{equation*}
    f_k(n,t) = \begin{cases}
    t  & \quad \text{ when } t=1,2\\
    \Theta(\log\log n) & \quad \text{ when } t \in [3,k]\\
    \Theta(\log n) & \quad \text{ when } t \in [2(k-1)!+1,k!].
    \end{cases}
\end{equation*}
\end{repthm}

The value of $f_k(n,t)$ for $t \in [k+1,2(k-1)!]$ is unknown but does lie between $\log\log n$ and $O(\log n)$. An interesting further question here is if the cases always split into exactly these three orders, or is there a different behaviour for some $t \in [k+1,2(k-1)!]$?

We again have the trivial cases $t=1,2$. Since Theorem \ref{ES lower} was for general $k$ that result is still giving us the lower bound when $t = 3$.

It is a direct consequence of a result of Spencer \cite{spencer: first element} that $f_k(n,k)= O(\log\log n)$. In fact Spencer proved the stronger claim that there exists a family $\mathcal{F}$ of permutations from $S_n$ with size $O(\log\log n)$ such that for every $k$-tuple $X$, and every $x \in X$, there is some $P \in \mathcal{F}$ with $x <_P y$ for all $y \in X \setminus x$. That is, not only does any $k$-tuple appear in at least $k$ orders, but each element in the $k$-tuple appears first in at least one order.

That leaves us with only the following result left to prove Theorem \ref{class t k}.

\begin{thm}
For fixed $k$ and when $n$ is large, we have that $f_k(n,t) = \Theta(\log n)$ whenever $t > \frac{2(k!)}{k}$.
\end{thm}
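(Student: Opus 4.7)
The plan is as follows. The upper bound $f_k(n,t) \le f_k(n,k!) = f_k(n) = O(\log n)$ is immediate from Spencer's probabilistic construction quoted in the introduction, so the real content is the matching lower bound $f_k(n,t) = \Omega(\log n)$ when $t > 2(k-1)!$.

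The first step is to extract a structural consequence of the hypothesis. Fix any pair $\{x,y\} \subseteq [n]$ and any $(k-2)$-subset $Z \subseteq [n]\setminus\{x,y\}$, and set $T = \{x,y\} \cup Z$. Among the $k!$ orderings of $T$, exactly $2(k-1)!$ have $x$ and $y$ in adjacent positions (treat $\{x,y\}$ as a block: $(k-1)!$ arrangements with the other $k-2$ elements, doubled for the internal order of the block). Since $\mathcal{S}$ realises more than $2(k-1)!$ distinct orders of $T$, at least one realised order has $x,y$ non-adjacent, which means some element of $Z$ appears strictly between $x$ and $y$ in the corresponding permutation $P$. Writing
\begin{equation*}
B(x,y) := \{z \in [n]\setminus\{x,y\} : z \text{ lies strictly between } x \text{ and } y \text{ in some } P \in \mathcal{S}\},
\end{equation*}
and applying the previous observation to $Z$ any $(k-2)$-subset disjoint from $B(x,y)\cup\{x,y\}$, we obtain $|[n]\setminus(\{x,y\}\cup B(x,y))| \le k-3$.

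The second step encodes elements by their ``side-of-$z$'' patterns. For each $z \in [n]$ define $\pi_z \colon [n] \to \{0,1\}^{|\mathcal{S}|}$ coordinate-wise by $\pi_z(u)_P = 1$ iff $pos(P,u) < pos(P,z)$. Then $\pi_z(x) = \pi_z(y)$ precisely when $x$ and $y$ lie on the same side of $z$ in every $P \in \mathcal{S}$, i.e.\ precisely when $z \notin B(x,y)$; the Step 1 bound therefore reads $|\{z \in [n]\setminus\{x,y\} : \pi_z(x)=\pi_z(y)\}| \le k-3$ for every pair. Summing over pairs and swapping the order,
\begin{equation*}
\sum_{z \in [n]} |\{\{x,y\}\subseteq[n]\setminus\{z\} : \pi_z(x)=\pi_z(y)\}| \le \binom{n}{2}(k-3).
\end{equation*}
For each $z$ bound the left-hand summand from below by convexity: with $b_v$ the fibre sizes of $\pi_z$ restricted to $[n]\setminus\{z\}$ (indexed by $v \in \{0,1\}^{|\mathcal{S}|}$), Cauchy--Schwarz gives $\sum_v \binom{b_v}{2} \ge \frac{n-1}{2}\left(\frac{n-1}{2^{|\mathcal{S}|}}-1\right)$. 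Comparing the two bounds yields $2^{|\mathcal{S}|} \ge (n-1)/(k-2)$, so $|\mathcal{S}| \ge \log(n-1) - \log(k-2) = \Omega(\log n)$ for fixed $k$.

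The main subtlety is identifying the right structural fact to match the precise threshold $t > 2(k-1)!$. A naive approach would fix a single distinguished element $a \in [n]$ and try to force every pair to be split by $a$, but the ``same side of $a$'' configurations are only ruled out when $t > 2k!/3$, which for $k \ge 4$ is strictly larger than $2(k-1)!$. The trick is to use the pair adjacency/non-adjacency dichotomy inside each $k$-tuple, whose threshold is exactly $2(k-1)!$, and then to let the distinguished element $z$ vary, paying for this flexibility with a convexity/double-counting step in place of a direct appeal to Lemma \ref{useful lem}.
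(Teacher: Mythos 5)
Your proof is correct, and the structural observation you isolate in Step~1 (every pair must be non-adjacent in some realised order of every $k$-tuple through it, because only $2(k-1)!$ orders keep the pair adjacent) is exactly the same seed as the paper's. Where you diverge is in how that observation is leveraged. The paper fixes the $(k-2)$ ``potential splitters'' to be the top elements $\{n-k+3,\dotsc,n\}$, concludes every pair $x,y\in[n-k+2]$ is split by at least one of them, and then runs a product/pigeonhole argument: the $(k-2)$-tuple $\bigl(A_x^{n-k+3},\dotsc,A_x^{n}\bigr)$ identifies $x$, so $\bigl(2^{|\mathcal{S}|}\bigr)^{k-2}\ge n-k+2$, giving $|\mathcal{S}|\ge\tfrac{1}{k-2}\log(n-k+2)$. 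You instead let the splitter $z$ range over all of $[n]$, deduce that for each pair all but at most $k-3$ elements split it, and then double-count incidences with a convexity step to get $2^{|\mathcal{S}|}\ge(n-1)/(k-2)$, i.e.\ $|\mathcal{S}|\ge\log(n-1)-\log(k-2)$. Both are $\Omega(\log n)$ for fixed $k$, so either suffices for the theorem, but your constant is better by a factor of roughly $k-2$ in the leading term; the price is the averaging machinery, where the paper's argument is a one-shot pigeonhole. Your closing remark correctly explains why the threshold is $2(k-1)!$ rather than the $2k!/3$ one would get from insisting a single fixed element lie between every pair.

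One small point of rigour worth spelling out in a final write-up: in Step~1, the conclusion ``$|[n]\setminus(\{x,y\}\cup B(x,y))|\le k-3$'' tacitly assumes $n\ge k$ so that a $(k-2)$-subset $Z$ of the complement actually exists whenever the complement has size at least $k-2$; and in Step~4 the convexity bound $\sum_v\binom{b_v}{2}\ge\frac{n-1}{2}\bigl(\frac{n-1}{2^{|\mathcal{S}|}}-1\bigr)$ is valid even if some fibres are empty, but it only yields useful information when $n-1>2^{|\mathcal{S}|}$, which is precisely the regime in question. Neither affects correctness.
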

\begin{proof}
Let $\mathcal{S}$ be a family that $(2(k-1)!+1)$-shatters every $k$-tuple. Consider $k$-tuples of the form $X :=\{ x,y,n-k+3,\dotsc,n-1,n \}$ for any $x,y \in [n-k+2]$.

Note that $x$ and $y$ must be split by at least one of $\{ n-k+3,\dotsc,n \}$ in some $P\in \mathcal{S}$. Indeed, there are only $2(k-1)!$ ways to order $X$ such that $x$ and $y$ are consecutive, yet we know that $X$ appears in at least  $2(k-1)!+1$ orders in $\mathcal{S}$.

Consider the following sets 
\begin{equation*}
    A_x^i := \{ P \in \mathcal{S} : x \text{ appears after }i \}
\end{equation*}
where $i \in [n-k+3,n]$. Then for any $x$ and $y$ there exists an $i \in [n-k+3,n]$ such that $A_x^i \neq A_y^i$.

For each $i \in [n-k+3,n]$ we define a partition of $[n-k+2]$ into at most $m$ parts, where elements $x,y \in [n-k+2]$ are in the same part if and only if $A^i_x=A^i_y$. Label the parts arbitrarily with labels $B^i_1,\dots,B^i_m$ noting that some labels may not be used at all.
Then we must have that
\begin{equation*}
    m^{k-2} \geq n-k+2.
\end{equation*}
Indeed, we are able to write each element $x$ uniquely as a $k-2$ length string from $[m]$, $x=(x_1,x_2,\dotsc, x_{k-2})$ where $x_i = r$ if $x \in B^i_r$. To see that this does create a unique identification, consider a pair $x, y$ with the same string. We must have that $x$ and $y$ are in the same $B^i$ part for all $i$, then from the definition of $B^is$ that means $A^i_y=A^i_x$ for all $i$. We have already established that distinct $x,y$ must have $A^i_y\neq A^i_x$ for some $i$ so conclude that $y=x$.

By choosing the smallest possible $m$ we can assume that there is some $i$ such that the partition has exactly $m$ parts, that is
\begin{equation*}
    |\{ B^i_1,\dots,B^i_m\}| = m.
\end{equation*}
Notice that the set 
\begin{equation*}
    \{ A^i_x : x \in [n-k+2] \}
\end{equation*}
must also have size $m$ since each $x \in B^i_r$ gives rise to the same set $A^i_x$. Therefore this set has size at least $(n-k+2)^{\frac{1}{k-2}}$ by our above bound on $m$ and hence
\begin{equation*}
    (n-k+2)^{\frac{1}{k-2}} \leq 2^{|\mathcal{S}|}.
\end{equation*}
Giving us the result
\begin{equation*}
    |\mathcal{S}| \geq \frac{1}{k-2}\log(n-k+2).
\end{equation*}
\end{proof}

\section[Totally shattering a fraction of all \texorpdfstring{$k$}{k}-tuples]{Totally shattering a fraction of all \texorpdfstring{$\boldsymbol{k}$}{k}-tuples}\label{Sec: frac}
For this problem we have a fixed number of permutations and wish to know the largest proportion of $k$-tuples that can be shattered. Recall that $F_k(n,m)$ is the maximum proportion $\alpha$ such that there is a family of size $m$ which completely shatters $\alpha\binom{n}{k}$ $k$-tuples from $[n]$.

The function $F_k(n,m)$ is weakly decreasing in $n$ when $k$ and $m$ are fixed such that $m\geq k!$.

\begin{repthm}{frac: weakly decreasing}
For fixed $k$ and fixed $m\geq k!$ we have $F_k(n,m) \geq F_k(n+1,m)$.
\end{repthm}
\begin{proof}
Suppose $F_k(n,m) = \alpha$ and consider a family $\mathcal{S}$ of $m$ permutations from $S_{n+1}$. Let $X \subseteq [n+1]$ with $|X|=n$, then by considering the permutations of $\mathcal{S}$ restricted to the elements of $X$, we see there are at most $\alpha \binom{n}{k}$ $k$-tuples shattered. In other words, at least $(1-\alpha)\binom{n}{k}$ $k$-tuples from $X$ remain un-shattered by $\mathcal{S}$. Since this is true for any such $X$ we get that the number of un-shattered $k$-tuples in $[n+1]$ is at least
\begin{equation*}
    \frac{(1-\alpha)\binom{n}{k}\binom{n+1}{n}}{\binom{n+1-k}{n-k}} =
    (1-\alpha)\binom{n+1}{k}.
\end{equation*}

Therefore the number of $k$-tuples that are shattered by $\mathcal{S}$ is at most $\alpha\binom{n+1}{k}$. Therefore, $F_k(n+1,m) \leq \alpha$.

\end{proof}

It is an easy observation that, for fixed $k$ and $m$, we can always find a suitably large value of $n$ such that every family of $m$ permutations from $S_n$ fails to shatter one $k$-tuple. Indeed, when $n \geq m^{2^m}$ we can apply the \hyperref[ES thm]{Erd\H{o}s-Szekeres Theorem} to see that there must be at least one un-shattered $k$-tuple. Therefore the limit of $F_k(n,m)$ in $n$ lies strictly between $0$ and $1$ for all $k$ and $m$, it is an interesting question to determine the value of this limit.

A direct consequence of this weakly decreasing behaviour is that the value of $F_k(N,m)$ for given fixed $N$ provides an upper bound on $F_k(n,m)$ where $n \geq N$. Consider the case when $k=3$, in particular we fix our family size at $m=6$ since this is the first non-trivial case.
\begin{thm}\label{frac:upper}
    For any $n \geq 8$ we have that $F_3(n,6) \leq \frac{11}{14}$.
\end{thm}
\begin{proof}
    We use the fact that a family of $6$ permutations of $[5]$ can shatter at most $8$ triples, so $F_3(5,6)=\frac{4}{5}$, which we have checked by hand (see Appendix \ref{F=4/5}). Applying Theorem \ref{frac: weakly decreasing} with $F_3(5,6)=\frac{4}{5}$ gives us $F_3(n,6) \leq \frac{4}{5}$ for any $n \geq 5$.

    Note that the number of triples shattered must be an integer and is given by $F_3(n,6)  \binom{n}{3}$. So for any $N$ with $F_3(N,6) \leq \alpha$ where $\alpha  \binom{N}{3}$ is not an integer we have at most $\lfloor \alpha  \binom{N}{3} \rfloor$ shattered triples. Using this and the weakly decreasing property we get a slightly lower upper bound on any $n>N$.

    In this case, when $n=8$ we have $F_3(8,6) \leq \frac{4}{5}$ and the maximum number of triples shattered by $6$ permutations is given by $F_3(8,6) \binom{8}{3} \leq \frac{4}{5} \times 56 = 44.8$, then we must shatter at most $44$ out of a possible $56$ triples, meaning $F_3(8,6) \leq \frac{44}{56}=\frac{11}{14} < \frac{4}{5}$. This along with Theorem \ref{frac: weakly decreasing} gives the desired result.

    We note that we can repeat this rounding down argument indefinitely for a smaller upper bound. The next step gives $F_3(n,6) \leq \frac{47}{60}$  whenever $n \geq 10$ by observing that $\frac{11}{14}  \binom{10}{3}$ is not an integer. In fact, let $\alpha_5=\frac{4}{5}$, $\alpha_8=\frac{11}{14}$, and $\alpha_{10}=\frac{47}{60}$ be the fractions from the first three steps in this process, and suppose $\alpha_x$ is the proportion given in $i$th step. Then the fraction given in the $(i+1)$th step will be one of $\alpha_{x+1}$, $\alpha_{x+2}$, or $\alpha_{x+3}$. In other words, for at least one of $N=x+1,x+2,x+3$ we have $\alpha_x  \binom{N}{3}$ is not an integer. This argument therefore continues indefinitely, however the actual bound it gives is hard to pin down and the numerical improvement is small and does not provide additional context, so we leave it at this.
\end{proof}

All our upper bounds come from analysing small $n$ and the above rounding argument. To improve these significantly seems to require a different and less case based approach.

To find a lower bound for $F_k(n,m)$ we show that some initial family on small $n$ can used in an iterative process, giving a family on $n^r$ which preserves much of the shattering from the initial family. Specific lower bounds can then be given by choosing a suitable initial family which shatters a high proportion of $k$-tuples.

Recall from the proof of Lemma \ref{little construction} a method of upscaling permutations on $N$ to permutations on $N^r$ (for any integer $r\geq 1$) known as \hyperref[type 1]{Type 1} permutations, we will use this method again in the next result.
\begin{thm} \label{frac: lower bound alpha family}
Suppose $\mathcal{S}$ is a family which shatters $\alpha \binom{N}{k}$ $k$-tuples from $[N]$, then for any integer $r\geq 1$, the set of \hyperref[type 1]{Type 1} permutations $\mathcal{S}^r= \{P^r : P \in \mathcal{S}\}$ shatters at least
\begin{equation}\label{frac equation: statement 1}
    \alpha \binom{N}{k} N^{(r-1)k}\frac{1-N^{r(1-k)}}{1-N^{1-k}}
\end{equation}
$k$-tuples from $[N^r]$. Hence, for all $n \geq N$
\begin{equation}\label{frac equation: statement 2}
    F_k(n,|\mathcal{S}|) \geq  \frac{\alpha(N-1)!}{(N-k)!(N^{k-1}-1)}.
\end{equation}
\end{thm}
\begin{proof}
Assign to each $x \in [N^r]$ a code $(x_1,x_2,\dotsc,x_r)\in [N]^r$ by equating the standard order on $[N^r]$ with the lexicographic order on $[N]^r$ (just as in the proof of Lemma \ref{little construction}). 

Let $P \in \mathcal{S}$ and recall that $P^r$ is the permutation from $S_{N^r}$ where $x <_{P^r} y$ if and only if $x_i <_P y_i$ where $i=d(x,y)=\min\{i:x_i \neq y_i\}$.

Note that any $k$-tuple whose elements have unique codes which differ for the first time in the same coordinate $i$, is shattered by $\mathcal{S}^r= \{P^r : P \in \mathcal{S}\}$ if the $k$-tuple of $i$th coordinates is shattered by $\mathcal{S}$. Indeed, consider $a_1,\dotsc,a_k \in [N^r]$ and write $(a^j_1,\dotsc,a^j_r)$ for the unique code of $a_j$ for each $j \in [k]$. If there exists a coordinate $i \in [r]$ such that $a^1_\ell=a^2_\ell=\dots=a^k_\ell$ whenever $\ell< i$ and where $a^1_i,a^2_i,\dotsc,a^k_i$ are all distinct, then the $k$-tuple $\{a_1,\dotsc,a_k\}$
will have its order in $P^r$ defined by the order of the $\{a^1_i,a^2_i,\dotsc,a^k_i\}$ in $P$. Hence such triples where $\{a^1_i,a^2_i,\dotsc,a^k_i\}$ is shattered by $\mathcal{S}$ are shattered by $\mathcal{S}^r$.

Therefore, we may count the minimum number of shattered $k$-tuples by counting exactly those $k$-tuples whose codes have the above property.
The number of such $k$-tuples is given by
\begin{equation*}
    \sum_{i=1}^r N^{i-1} \alpha\binom{N}{k} (N^{r-i})^k = \alpha\binom{N}{k}N^{(r-1)k}\sum_{i=0}^{r-1} \left(N^{1-k}\right)^i.
\end{equation*}
This proves the first statement.

The second statement follows by selecting an integer $r$ such that $n \leq N^r$. We will show that $F_k(N^r,|\mathcal{S}|)$ follows the statement whenever $r \geq 1$, then the decreasing property of Theorem \ref{frac: weakly decreasing} gives the statement for $F_k(n,|\mathcal{S}|)$.

Observe that equation (\ref{frac equation: statement 1}) gives us the minimum number of $k$-tuples shattered, to get an expression for the proportion $F_k(N^r,|\mathcal{S}|)$ we simply divide by the total number of $k$-tuples.
\begin{align*}
    F_k(N^r,|\mathcal{S}|) &\geq \alpha \binom{N}{k} \binom{N^r}{k}^{-1} N^{(r-1)k}\frac{1-N^{r(1-k)}}{1-N^{1-k}} \\
    &= \frac{N^{rk}(1-N^{r(1-k)})}{N^r(N^r -1)\dotsm (N^r -k +1)} \times \frac{\alpha(N-1)!}{(N-k)!(N^{k-1}-1)}
\end{align*}
To prove the second statement of the theorem, it is enough to show that
\begin{equation}\label{frac equation: fraction}
    \frac{N^{rk}(1-N^{r(1-k)})}{N^r(N^r -1)\dotsm (N^r -k +1)} \geq 1.
\end{equation}
To do this we can use induction on $k$. Note that when $k=2$ the left hand side of (\ref{frac equation: fraction}) is equal to $1$, so the statement holds for $k=2$. Then note the difference in the expression when considering $k+1$
\begin{equation*}
    \frac{N^{r(k+1)}(1-N^{r(1-(k+1))})}{N^r(N^r -1)\dotsm (N^r -(k+1) +1)} = \frac{N^{rk}(1-N^{r(1-k)})}{N^r(N^r -1)\dotsm (N^r -k +1)} \times \frac{N^{rk}-1}{(N^r-k)(N^{r(k-1)}-1)}.
\end{equation*}
Then (\ref{frac equation: fraction}) must hold for all $k>2$ as long as
\begin{equation*}
    \frac{N^{rk}-1}{(N^r-k)(N^{r(k-1)}-1)} \geq 1.
\end{equation*}
Observe that this holds whenever $kN^{r(k-1)}+N^r \geq k+1$, and since $N^r > 1$ we have satisfied all the conditions.
\end{proof}

We know from Levenshtein \cite{perfect family} that a perfect family $\mathcal{Q}_k(k+1)$ exists. By setting $\mathcal{S}= \mathcal{Q}_k(k+1)$, we can apply Theorem \ref{frac: lower bound alpha family} with $n=k+1$ and $\alpha=1$ to get the following result.
\begin{cor}\label{frac: lower bound - L perfect family iterated}
For any $n$ we have that
\begin{equation*}
    F_k(n,k!) \geq 
    \frac{k!}{(k+1)^{k-1}-1}.
\end{equation*}
\end{cor}

When $k=3$ it is known that the largest value $n$ for which a perfect family $\mathcal{Q}_k(n)$ exists is $4$. Indeed, the family $\mathcal{Q}_3(4)$ (\hyperlink{Q3(4)}{Section \ref*{intro pt 1}}) is perfect, but $F_3(5,3!)=\frac{4}{5}$ (see Appendix \ref{F=4/5}) and $F_k(n,m)$ is decreasing (Theorem \ref{frac: weakly decreasing}), meaning that $\mathcal{Q}_3(n)$ does not exists whenever $n \geq 5$. However, in general it is not known which values (if any) of $n>k+1$ admit a perfect family for any given $k$. This is an interesting open question in its own right, but perfect families for large values of $n$ would give better lower bounds for $F_k(N,k!)$ for all $N<n$.

The family $\mathcal{S}$ used in Theorem \ref{frac: lower bound alpha family} need not be perfect to give a lower bound. Note that the bound of Corollary \ref{frac: lower bound - L perfect family iterated} when $k=3$ gives $F_3(n,6) \geq \frac{2}{5}$ for all $n$. We can improve this bound by using a non-perfect family on $8$ points for $\mathcal{S}$.

\begin{cor}\label{frac: lower bound 8 family}
    For all $n$ we have $F_3(n,6) \geq \frac{17}{42}$.
\end{cor}
\begin{proof}
Apply Theorem \ref{frac: lower bound alpha family} with $n=8$, $k=3$, and $\mathcal{S}$ given by the $6$ permutations below:
\begin{equation*}
    \begin{array}{l l}
        P_1= (6,1,2,5,8,3,4,7) \quad& P_2= (5,2,1,6,7,4,3,8) \\
        P_3= (7,3,6,2,8,4,5,1) \quad& P_4= (3,7,1,5,4,8,2,6) \\
        P_5= (8,4,5,1,7,3,6,2) \quad& P_6= (4,8,2,6,3,7,1,5).
    \end{array}
\end{equation*}
Note that $\mathcal{S}$ shatters $34$ out of $56$ possible triples.
\end{proof}

Combining the results of Theorem \ref{frac:upper} and Corollary \ref{frac: lower bound 8 family} gives Theorem \ref{frac: 3 6}, and we see that $\lim_{n \to \infty} F_3(n,6)$ lies in the interval $[\frac{17}{42}, \frac{11}{14})$.

\section{Constructions for total Shattering}
Although we have a probabilistic argument showing that $f_k(n) = O(\log n)$, there is only a construction matching this size for $k=3$. We have not been able to extend any of the separation ideas that work in the special $k=3$ case to give an $O(\log n)$ construction for arbitrary values of $k$. Therefore finding such a construction is still an open problem.
This section gives an iterative construction for small shattering families and applies to any value of $k$. The main idea is to identify each element of $[n]$ with a point in the  $k$-dimensional lattice, and take permutations by grouping the points in each of the $k$ directions. Unfortunately the bound this gives is a power of $\log n$ rather than the known $O(\log n)$ bound.

\begin{lem} \label{k cube upper}
Given a family that shatters every $k$-tuple in $[n^{k-1}]$ and has size $S$, we can give an explicit construction of a family that shatters every $k$-tuple from $[n^k]$ and has size $kS$.
\end{lem}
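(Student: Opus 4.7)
The plan is to identify $[n^k]$ with the lattice $[n]^k$ so that each element corresponds to a $k$-tuple of coordinates $(x_1,\dotsc,x_k)$. For each direction $i \in [k]$ and each $\sigma$ in the given shattering family $\mathcal{F}$ of $[n^{k-1}]$, I would construct one permutation $\Pi_{i,\sigma}$ of $[n^k]$ as follows. Fix a bijection between $[n]^{k-1}$ and $[n^{k-1}]$, and order the elements of $[n]^k$ primarily by $\sigma$ applied to the $(k-1)$-tuple $(x_1,\dotsc,\widehat{x_i},\dotsc,x_k)$ obtained by deleting coordinate $i$, breaking ties by $x_i$ in increasing order. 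This yields a family of $kS$ permutations of $[n^k]$.

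To verify shattering, fix distinct points $p^{(1)},\dotsc,p^{(k)} \in [n]^k$ and a desired ordering $\tau$ on them. The central claim is that there exists a coordinate $i \in [k]$ for which the $k$ projections obtained by deleting coordinate $i$ from each $p^{(j)}$ are pairwise distinct. Granted this claim, the projections form a genuine $k$-tuple in $[n^{k-1}]$; by hypothesis some $\sigma \in \mathcal{F}$ orders these projections according to $\tau$, and since they are distinct the tie-breaker on $x_i$ is irrelevant, so $\Pi_{i,\sigma}$ orders the original points according to $\tau$. Running over all $\tau \in S_k$ shows the $k$-tuple is shattered.

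The only step requiring real work is the claim, and I expect to prove it as follows. Suppose for contradiction that for every $i \in [k]$ some pair among the $k$ points agrees on all coordinates except $i$. For each $i$ record one such pair and view it as an edge labeled $i$ in a graph $G$ on vertex set $[k]$. Two distinct labels $i \neq j$ cannot be attached to the same edge, because the corresponding pair of points would then agree on all coordinates other than $i$ and also on all coordinates other than $j$, forcing equality; hence $G$ has $k$ distinct edges on $k$ vertices and therefore contains a cycle $a_1, a_2, \dotsc, a_m, a_1$. The edges of this cycle carry distinct labels, so exactly one of them alters coordinate $i_1$, where $i_1$ denotes the label of the edge $a_1 a_2$. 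Traversing the cycle thus changes the value of coordinate $i_1$ in precisely one step, contradicting the fact that we return to the starting point $p^{(a_1)}$. This contradiction yields the claim and completes the construction.
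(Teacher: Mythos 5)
Your proposal is correct and follows essentially the same route as the paper: identify $[n^k]$ with the lattice $[n]^k$, build $kS$ permutations by ordering according to the coordinate-deleted projection under each $\sigma$ in each of the $k$ directions, and establish shattering by showing some direction projects the $k$ points injectively, using the same graph-cycle contradiction (one edge per direction, distinct labels force a simple graph with $k$ edges on $k$ vertices, hence a cycle whose traversal changes some coordinate exactly once). The only cosmetic differences are that you break ties by $x_i$ increasing rather than arbitrarily, and you close the cycle argument by tracking a single coordinate rather than all $t$ of them.
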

\begin{proof}
Let $\mathcal{S}$ be a shattering family for $[n^{k-1}]$, we use the permutations in this family to construct permutations of $[n^k]$. For simplicity, let $m = n^{k-1}$.

We consider the elements of $[n^k]$ viewed geometrically as points on an integer lattice of dimension $k$. Label each element $x \in [n^k]$ by the string $(i_1,\dotsc,i_k)$ with all $i_j \in [n]$. Without loss of generality, we may assume that all elements of $[m]$ are labelled with $(1,i_2,\dotsc,i_{k})$ and hence can be thought of instead as the $k-1$ string $(i_2,\dotsc,i_{k})$. This means every $k-1$ string is associated to an element of $[m]$.

Let $r_j(x)= (i_1,\dotsc, i_{j-1},i_{j+1},\dotsc,i_k)$ be $x$ with the $j$th coordinate omitted. Note that $r_j(x)$ is therefore associated to an element of $[m]$.

For each $P\in \mathcal{S}$ we will create $k$ permutations of $[n^k]$, $P'_1,\dotsc,P'_k$. To generate the permutation $P'_j$ order $x$ before $y$ if and only if $r_j(x)$ appears before $r_j(y)$ in $P$, if $r_j(x)=r_j(y)$ then order arbitrarily.

This generates $k|\mathcal{S}|$ permutations of $[n^k]$, it is left to show that these are sufficient to shatter all $k$-tuples.

Given $k$ points in $[n]^k$, $A=\{ a_1,\dotsc,a_k \}$, there exists a direction $j \in [k]$ such that the projection of $A$ in direction $j$ has $k$ points.

Indeed, suppose not for a contradiction. For all $j \in [k]$ there is some pair $a_\ell, a_t \in A$ such that $r_j(a_\ell)=r_j(a_q)$. Define a graph $G$ with vertex set $A$, and with one edge for each direction $j\in [k]$ between some pair of vertices $u,v \in A$ with $r_j(u)=r_j(v)$. By our assumption there is at least one such pair for each $j\in [k]$, if there is a choice then pick arbitrarily. Note that if $r_j(u)=r_j(v)$ for some $j$ then we cannot have $r_{\ell}(u)=r_{\ell}(v)$ for any $\ell \in [k]\setminus j$ by definition, so we can never pick the same edge more than once. Hence our graph on $k$ vertices has exactly $k$ edges, therefore $G$ must contain a cycle. Let $v_1,\dotsc,v_t$ be a cycle in $G$, then the edge $v_1v_2$ demonstrates a change in one coordinate, say $j_1$. Similarly edge $v_2v_3$ demonstrates a change in coordinate $j_2$. Observe that $j_1$ and $j_2$ are distinct since there is only one edge for each direction. Continuing, we find $t$ distinct directions $j_1,\dots,j_t$ where $j_t$ is the direction of the edge $v_tv_1$. This is equivalent to starting with $v_1$, changing $t$ different coordinates and ending up back at $v_1$. Clearly this cannot happen and therefore we have a contradiction. 

We have shown that for any $k$-tuple $A=\{ a_1,\dotsc,a_k \}$, there is a coordinate $j$ such that $r_j(a_\ell)\neq r_j(a_q)$ for all $\ell,q \in [k]$. Hence $A$ is shattered by the collection of permutations of the form $P'_j$ where $P\in \mathcal{S}$.

Therefore our collection of $P'$s does indeed shatter all the $k$-tuples in $[n^k]$, and we used $k|\mathcal{S}| $ permutations in total.
\end{proof}

Repeatedly applying the construction in \ref{k cube upper} gives an upper bound $f_k(n) \leq (\log n)^{c_k}$ where $c_k \approx \frac{\log k}{\log k-\log(k-1)}$.

\section{Open Problems}
For the partial shattering variant, does the size of $f_k(n,t)$ always fall into one of the three sizes as classified in Theorem \ref{class t 3} and \ref{class t k}? We know that there are values of $t$ that put $f_k(n,t)$ in each of these regimes, but is there another size bracket in between $\log\log n$ and $\log n$? In particular we ask:
\begin{question}
When $k > 3$, what is the value of $f_k(n,t)$ for $k+1 \leq t \leq 2(k-1)!$?
\end{question}
It is not difficult to see that when $t$ is odd we can bound $f_k(n,t+1)\leq 2f_k(n,t)$ by taking the family that realises $f_k(n,t)$ along with all its reverse permutations. This means that for odd $k$ we know $f_k(n,k+1)$ is $O(\log\log n)$. We also ask the slightly weaker question:
\begin{question}
Is it true that $f_k(n,t)$ is one of three sizes $\Theta(\log n)$, $\Theta(\log\log n)$, or constant for all values of $t$?
\end{question}

Thinking about fractional shattering, where $\alpha\binom{n}{k}$ $k$-tuples are completely shattered, we ask:
\begin{question}\label{Q:fract general limit}
For fixed $k$ and $m$, what is the limit of $F_k(n,m)$ as $n$ increases?
\end{question}
We saw that $F_k(n,m)$ is decreasing, and by choosing $m \geq k!$ we know that the limit as $n$ increases exists and is strictly between $0$ and $1$. Progress on this question seems to require a method which does not just rely on using fixed small $n$. It would be interesting to find another method for finding upper bounds on $F_k(n,m)$ using an alternative approach. The first interesting case of Question \ref{Q:fract general limit} is the following:
\begin{question}
What is $\lim_{n \to \infty} F_3(n,6)$?
\end{question}
We saw in Section \ref{Sec: frac} that we can use perfect families to give lower bounds for $F_k(n,k!)$. Knowing more about when perfect families occur is not only useful for bounds on Fractional Shattering, but is also interesting and worthwhile in its own right. We therefore highlight the question:
\begin{question}
What is the largest value of $n>k+1$ for which the perfect family $\mathcal{Q}_k(n)$ exists?
\end{question}

\begin{appendices}

\section{ }\label{F=4/5}
To show that $F_3(5,6)=\frac{4}{5}$ we must show that any $6$ permutations from $S_5$ shatter at most $8$ triples out of a possible $10$. Therefore it is sufficient to show that in any $6$ permutations there are at least $2$ distinct triples that are not shattered, and hence have a repeated pattern.

So we look for a family of $6$ permutations from $S_5$ with at most $1$ triple un-shattered (i.e. with a pattern repeated), if we cannot find such a family then we have proved the statement.

If such a family exists, we may assume that the triple that is not shattered contains the element $5$. From this we see that the family of $6$ permutations of $S_4$ generated by omitting $5$ must shatter every triple from $[4]$. Since the family $\mathcal{Q}_k(4)$ is unique up to isomorphism, to prove the statement we show that 
adding $5$ in any position to permutations from $\mathcal{Q}_k(4)$ results in a family that leaves $2$ triples un-shattered.

Here is the family $\mathcal{Q}_k(4)$:
\begin{equation*}
    \begin{array}{ l l l}
        Q_1= (1,2,3,4) \quad& Q_2= (2,4,1,3) \quad& Q_3= (3,4,1,2) \\
        Q_4= (1,4,3,2) \quad& Q_5= (4,2,3,1) \quad& Q_6= (3,2,1,4).
    \end{array}
\end{equation*}
The permutation generated by adding $5$ into $Q_i$ in some position will be denoted $Q'_i$.

We split into $5$ cases, one for each location of element $5$ in $Q_1$.

Case 1: $Q'_1=(1,2,3,4,5)$.

So we have fixed $Q'_1$, consider the options for $Q'_2$.

If $Q'_2 = (2,4,1,3,5)$ the triples $\{1,3,5\}$ and $\{2,4,5\}$ appear in the same order in both $Q'_1$ and $Q'_2$. Any family that contains $Q'_1$ and $Q'_2 = (2,4,1,3,5)$ is not the family we search for, so we look at a different option for $Q'_2$.

If $Q'_2 = (2,4,1,5,3)$ then the triple $\{2,4,5\}$ appears in the same order in $Q'_1$ and $Q'_2$. Consider now $Q'_3 = (3,4,\star,1,\star,2,\star)$ where $5$ appears in any location denoted by $\star$, the triple $\{3,4,5\}$ appears in the same order in $Q'_1$ and $Q'_3$. This forces both triples $\{2,4,5\}$ and $\{3,4,5\}$ to repeat. So we assume instead that $Q'_3 = (\star,3,\star,4,1,2)$.

So we have fixed $Q'_1=(1,2,3,4,5)$, $Q'_2 = (2,4,1,5,3)$, and $Q'_3 = (\star,3,\star,4,1,2)$ and we know that $\{2,4,5\}$ is repeated already. Consider $Q'_5 = (4,2,3,\star,1,\star)$, then $\{2,3,5\}$ appears in the same order in $Q'_1$ and $Q'_5$, meaning both $\{2,4,5\}$ and $\{2,3,5\}$ are not shattered. Similarly if $Q'_5 = (4,\star,2,\star,3,1)$ the triple $\{3,4,5\}$ is copied in $Q'_2$ and $Q'_5$, meaning $\{2,4,5\}$ and $\{3,4,5\}$ are not shattered. Finally, if $Q'_5 = (5,4,2,3,1)$ we have $\{1,4,5\}$ appearing in the same pattern in $Q'_3$ and $Q'_5$ giving the pair $\{2,4,5\}$ and $\{1,4,5\}$.

We record the above information as follows.
\begin{equation*}
\begin{array}{ll}
    Q'_2 = (2,4,1,5,3) & \{2,4,5\} \\
    \phantom{Q'_2 =} Q'_3 = (3,4,\star,1,\star,2,\star) & \phantom{\{2,4,5\},}\{3,4,5\} \\
    \phantom{Q'_2 =} Q'_3 = (\star,3,\star,4,1,2) & \phantom{\{2,4,5\},} \\
    \phantom{Q'_2 =} \phantom{Q'_3 =} Q'_5 = (4,2,3,\star,1,\star) & \phantom{\{2,4,5\},}\{2,3,5\} \\
    \phantom{Q'_2 =} \phantom{Q'_3 =} Q'_5 = (4,\star,2,\star,3,1) & \phantom{\{2,4,5\},}\{3,4,5\} \\
    \phantom{Q'_2 =} \phantom{Q'_3 =} Q'_5 = (5,4,2,3,1) & \phantom{\{2,4,5\},}\{1,4,5\}
\end{array}
\end{equation*}
This means that the family we search for cannot contain $Q'_1$ and $Q'_2 = (2,4,1,5,3)$, so we assume next that $Q'_2= (2,4,5,1,3)$. It happens that for $Q'_2= (2,4,5,1,3)$ the case is the same as that of $Q'_2 = (2,4,1,5,3)$.

For the remaining two options for $Q'_2$ we have the following.
\begingroup
\allowdisplaybreaks
\begin{alignat*}{2}
    & Q'_2 = (2,5,4,1,3) &&  \\
    &\phantom{Q'_2 =} Q'_4 = (1,4,3,\star,2,\star) && \{1,4,5\},\{1,3,5\} \\
    &\phantom{Q'_2 =} Q'_4 = (1,4,5,3,2) && \{1,4,5\} \\
    &\phantom{Q'_2 =} \phantom{Q'_4 =} Q'_5 = (4,2,3,\star,1,\star) && \phantom{\{1,4,5\},}\{2,3,5\} \\
    &\phantom{Q'_2 =} \phantom{Q'_4 =} Q'_5 = (\star,4,\star,2,\star,3,1) && \phantom{\{1,4,5\},}\{3,4,5\} \\
    &\phantom{Q'_2 =} Q'_4 = (1,5,4,3,2) && \{3,4,5\} \\
    &\phantom{Q'_2 =} \phantom{Q'_4 =} Q'_3 = (3,4,1,\star,2,\star) && \phantom{\{3,4,5\},}\{1,2,5\} \\
    &\phantom{Q'_2 =} \phantom{Q'_4 =} Q'_3 = (3,4,5,1,2) && \phantom{\{3,4,5\},} \\
    &\phantom{Q'_2 =} \phantom{Q'_4 =} \phantom{Q'_3 =} Q'_5 = (4,2,\star,3,\star,1,\star) && \phantom{\{3,4,5\},}\{2,3,5\} \\
    &\phantom{Q'_2 =} \phantom{Q'_4 =} \phantom{Q'_3 =} Q'_5 = (\star,4,\star,2,3,1) && \phantom{\{3,4,5\},}\{1,4,5\} \\
    &\phantom{Q'_2 =} \phantom{Q'_4 =} Q'_3 = (\star,3,\star,4,1,2) && \phantom{\{3,4,5\},}\{1,4,5\} \\
    &\phantom{Q'_2 =} Q'_4 = (5,1,4,3,2) && \{3,4,5\},\{1,3,5\}\\
    &\phantom{Q'_4 =} && \phantom{Q'_4 =} \\
    &Q'_2 = (5,2,4,1,3) &&  \\
    &\phantom{Q'_2 =} Q'_4 = (1,4,3,\star,2,\star) && \{1,4,5\},\{1,3,5\} \\
    &\phantom{Q'_2 =} Q'_4 = (1,4,5,3,2) && \{1,4,5\} \\
    &\phantom{Q'_2 =} \phantom{Q'_4 =} Q'_5 = (4,2,3,\star,1,\star) && \phantom{\{1,4,5\},}\{2,3,5\} \\
    &\phantom{Q'_2 =} \phantom{Q'_4 =} Q'_5 = (4,2,5,3,1) && \phantom{\{1,4,5\},}\{3,4,5\} \\
    &\phantom{Q'_2 =} \phantom{Q'_4 =} Q'_5 = (\star,4,\star,2,3,1) && \phantom{\{1,4,5\},}\{1,2,5\} \\
    &\phantom{Q'_2 =} Q'_4 = (1,5,4,3,2) && \{3,4,5\} \\
    &\phantom{Q'_2 =} \phantom{Q'_4 =} Q'_5 = (4,2,3,\star,1,\star) && \phantom{\{1,4,5\},}\{2,3,5\} \\
    &\phantom{Q'_2 =} \phantom{Q'_4 =} Q'_5 = (4,2,5,3,1) && \phantom{\{1,4,5\},} \\
    &\phantom{Q'_2 =} \phantom{Q'_4 =} \phantom{Q'_5 =} Q'_3 = (3,4,1,\star,2,\star) && \phantom{\{1,4,5\},}\{1,2,5\} \\
    &\phantom{Q'_2 =} \phantom{Q'_4 =} \phantom{Q'_5 =} Q'_3 = (\star,3,\star,4,\star,1,2) && \phantom{\{1,4,5\},}\{1,4,5\} \\
    &\phantom{Q'_2 =} Q'_4 = (5,1,4,3,2) && \{3,4,5\},\{1,3,5\} \\
\end{alignat*}
\endgroup

These show that if $Q'_2= (2,5,4,1,3)$ or $Q'_2= (5,2,4,1,3)$, there is no position $5$ can take in $Q'_4$ without causing a pair of un-shattered triples. With all of these pieces we see that no matter where $5$ is found in $Q'_2$ it leads to a family with at most $8$ triples shattered.

We continue like this for the remaining cases, we omit the details since they are straightforward and unenlightening. None of the cases provide a family that shatters more than $8$ triples, and hence we have $F_3(5,6)=\frac{4}{5}$.

\end{appendices}

\end{document}